\documentclass[final]{dmtcs-episciences}

% DON'T LOAD ANY STYLES THAT CHANGE THE PAGE LAYOUT
% AND DON'T CHANGE THE PAGE LAYOUT BY HAND, EITHER.
\usepackage{amsmath}
\usepackage{amssymb}
\usepackage{amsfonts}
\usepackage{amsthm}
\usepackage{xcolor}
\usepackage{dashbox}
\usepackage{mathrsfs}
\usepackage{appendix}

\usepackage{tikz}
\usepackage{rotating}
\usepackage[all]{xy}
\usepackage{enumerate}

\newtheorem{theorem}{Theorem}
\newtheorem{lemma}{Lemma}
\newtheorem{corollary}{Corollary}
\newtheorem{proposition}{Proposition}
\theoremstyle{definition}
\newtheorem{definition}{Definition}
\newtheorem{example}{Example}

\theoremstyle{remark}
\newtheorem{remark}{Remark}
% \numberwithin{equation}{section}

\usepackage[utf8]{inputenc}
\usepackage{subfigure}

% graphicx is now loaded automatically no need to put this in here anymore.
%
%\usepackage{graphicx}

% We strongly recommend to use natbib. Your colleagues deserve to be
% named in your text. PLEASE, ADAPT YOUR TEXT ACCORDINGLY, such that
% citations are grammatically correct.
\usepackage[round]{natbib}

\author{Ioannis Michos\affiliationmark{1}\thanks{Corresponding author.}
  \and Christina Savvidou\affiliationmark{2}}
\title[Enumeration of super-strong Wilf classes]{Enumeration of super-strong Wilf equivalence classes
  of permutations in the generalized factor order}
% put your affiliation here, not your full address.
% If you like to give away your email or other parts of your address,
% THIS IS NOT THE RIGHT PLACE, your address will change, this paper
% will not.
% Just watch that your personal data that you want to communicate on
% the episcience server is always up to date.
\affiliation{
  % one line per affiliation, no postal codes, grant numbers or similar
  Department of Computer Science and Engineering, European University Cyprus, Nicosia, Cyprus\\
  School of Sciences, UCLan Cyprus, Pyla, Cyprus}
\keywords{Patterns in permutations, generalized factor order, super-strong Wilf equivalence, non-interval permutations, shift equivalence.}
% don't try to cheat here, we will check the dates!
\received{2018-12-22}
\revised{2019-7-18}
\accepted{2019-10-15}
\begin{document}
\publicationdetails{21}{2019}{2}{3}{5055}
\maketitle
\begin{abstract}
Super-strong Wilf equivalence classes of the symmetric group ${\mathcal S}_n$ on $n$ letters, with respect to the generalized factor order, were shown by Hadjiloucas, Michos and Savvidou (2018) to be in bijection with pyramidal sequences of consecutive differences. In this article we enumerate the latter by giving recursive formulae in terms of a two-dimensional analogue of non-interval permutations. As a by-product, we obtain a recursively defined set of representatives of super-strong Wilf equivalence classes in ${\mathcal S}_n$. We also provide a connection between super-strong Wilf equivalence and the geometric notion of shift equivalence---originally defined by Fidler, Glasscock, Miceli, Pantone, and Xu (2018) for words---by showing that an alternate way to characterize super-strong Wilf equivalence for permutations is by keeping only rigid shifts in the definition of shift equivalence. This allows us to fully describe shift equivalence classes for permutations of size $n$ and enumerate them, answering the corresponding problem posed by Fidler, Glasscock, Miceli, Pantone, and Xu (2018).
\end{abstract}

\section{Introduction}
\label{sec:in}
In this work we continue the study of \emph{super-strong Wilf equivalence} on permutations in $n$ letters with respect to the generalized factor order that commenced in \cite{HMS}. S.\ Kitaev et al.\ in \cite{Kitaev} referred to this notion originally as \emph{strong Wilf equivalence}. J.\ Pantone
and V.\ Vatter in \cite{Pantone} used the term ``super-strong Wilf'' to distinguish this from a more general notion they defined and called ``strong Wilf equivalence''.

First we recall some notation and definitions from \cite{Kitaev} and \cite{HMS}. Let $\mathbb P$ and $\mathbb N$ be the sets of positive and non-negative integers, respectively. For each $n, m \in {\mathbb N}$ with $m < n$ we let $[m, n] = \{ m, m+1, \ldots, n \}$ and for each $n \in {\mathbb P}$ we let $[n] = [1, n]$. Let $\mathbb{P}^{*}$  be the set of words on the alphabet $\mathbb{P}$. Its elements are of the form $w= w_1 w_2 \cdots w_i w_{i+1}\cdots w_{n-1} w_n$, with $n \in \mathbb N$ and $w_i \in \mathbb P$ for $i\in [n]$. The {\em reversal} $\tilde{w}$ of $w$ is defined as $\tilde{w} = w_n w_{n-1} \cdots w_{i+1} w_i \cdots w_2 w_1$.
If $n=0$, then $w$ is equal to the {\em empty word}---which will be denoted by $\epsilon$---whereas if $n \in \mathbb P$ and each letter $w_i$ appears exactly once, $w$ is a permutation on $n$ letters. We let $|w|$ be the {\em length} $n$ of the word $w$, $||w||$ be the {\em height} or {\em norm} of $w$ defined as $||w|| = w_{1} + \cdots + w_{i} + \cdots + w_{n}$ and ${\rm{alph}}(w)$ be the set of distinct letters of $\mathbb{P}$ that occur in $w$. The set of all permutations $w$ on $n$ letters such that ${\rm{alph}}(w) = [n]$ is denoted by ${\mathcal S}_n$.

Given $w, u \in \mathbb{P}^{*}$, we say that $u$ is a {\em factor} of $w$ if there exist words $s, v \in \mathbb{P}^{*}$ such that $w = suv$. If $s = \epsilon$ (resp., $v = \epsilon$) $u$ is called a {\em prefix} (resp., {\em suffix}) of $w$. A prefix $u$ of a non-empty word $w$ is called {\em proper}
if $u \neq w$.

Consider the poset $(\mathbb{P}, {\leq})$ with the usual order in $\mathbb{P}$. The {\em generalized factor order} on $\mathbb{P}^{*}$, introduced by Kitaev et al. in \cite{Kitaev}, is the partial order---also denoted by $\leq$ here---obtained by letting $u \leq w$ if and only if there is a factor $v$ of $w$ such that $|u| = |v|$ and $u_{i} \leq v_{i}$, for each $i \in [|u|]$.
Such a factor $v$ of $w$ is called an {\em embedding} of $u$ in $w$. If the first element of $v$ is the $j$-th element of $w$, then the index $j$ is called an {\em embedding index} of $u$ into $w$. The {\em embedding index set} of $u$ into $w$, or \emph{embedding set} for brevity, is defined as the set of all embedding indices of $u$ into $w$ and is denoted by $Em(u, w)$.

For example, if $u = 322$ and $w = 2343213421$, then $u \leq w$ with embedding factors $v = 343$, $v' = 432$ and $v'' = 342$ and corresponding embedding index set $Em(u,w) = \{2, 3, 7\}$. \\

Let now $t, x$ be two commuting indeterminates. The {\em weight} of a word $w \in \mathbb{P}^{*}$ is defined as the monomial $wt(w) = t^{|w|}x^{||w||}$.
A bijection $f:\mathbb{P}^* \rightarrow \mathbb{P}^*$ is called {\em weight-preserving} if the weight of $w$ is preserved under $f$, i.e., $|f(w)| = |w|$ and $||f(w)|| = ||w||$, for every $w \in \mathbb{P}^{*}$.

\begin{definition} (\cite[Section 5]{Kitaev}) \label{def:ssWilf} Two words $u, v \in \mathbb{P}^{*}$ are called {\em super-strongly Wilf equivalent}, denoted $u \, {\sim}_{ss} \, v$, if there exists a weight-preserving bijection $f : \mathbb{P}^{*} \rightarrow \mathbb{P}^{*}$ such that
$Em(u, w) = Em(v, f(w))$, for all $w \in \mathbb{P}^{*}$.
\end{definition}

In \cite{HMS} super-strong Wilf equivalence classes in ${\mathcal S}_n$ were fully characterised using {\em pyramidal sequences of consecutive differences} of permutations. The latter are defined as follows.

\begin{definition}(\cite[Definition 13]{HMS}) \label{def:SeqDiff}
Let $u \in {\mathcal{S}}_{n}$ and $s = s_{1} \cdots s_{i} \cdots s_{n} = u^{-1}$. For $i = n - 1$ down to $1$ consider the suffix
$s_{i} s_{i+1} \cdots s_{n-1} s_{n}$ of $s$ and its alphabet set
${\Sigma}_{i}(s) = {\rm{alph}}(s_{i} s_{i+1} \cdots s_{n-1} s_{n}) = \{ s_{i}^{(i)}, s_{i+1}^{(i)}, \ldots , s_{n-1}^{(i)} , s_{n}^{(i)} \}$,
where $s_{i}^{(i)} < s_{i+1}^{(i)} < \cdots < s_{n-1}^{(i)} < s_{n}^{(i)}$.
We define ${\Delta}_{i}(s)$ to be the vector of \emph{consecutive differences} in ${\Sigma}_{i}(s)$, i.e.,
\[ {\Delta}_{i}(s) = ( s_{i+1}^{(i)} - s_{i}^{(i)}, \ldots , s_{n}^{(i)} - s_{n-1}^{(i)} ). \]
The sequence
\[ p(s) = ( \Delta_1(s), \Delta_2(s), \ldots , \Delta_{n-2}(s), \Delta_{n-1}(s)) \]
has a pyramidal form and is called the {\em pyramidal sequence of consecutive differences} of $s \in {\mathcal S}_n$.
For any $u \in {\mathcal S}_{n}$ note that we always obtain ${\Delta}_1(s) = (\underbrace{1,1, \ldots, 1}_{n-1})$.
\end{definition}

\begin{remark}\label{rem: Delta meaning}
Let $u \in {\mathcal{S}}_{n}$ and $s = u^{-1}$. The vector $\Delta_i(s)$ can also be computed directly from $u$ as the vector of distances between letters in $u$ that are greater than or equal to $i$, as they appear sequentially in $u$ from left to right.
\end{remark}

\vspace{0.1in}

\begin{example}\label{ex:pyramiddiff}
Let $u = 592738164$. Then $s = u^{-1} = 735918462$. The pyramidal sequence of differences for $s$ is the following:
\begin{center}
$\Delta_8(s) = (4)$ \\
$\Delta_7(s) = (2,2)$ \\
$\Delta_6(s) = (2,2,2)$ \\
$\Delta_5(s) = (1,2,2,2)$ \\
$\Delta_4(s) = (1,2,2,2,1)$ \\
$\Delta_3(s) = (1,2,1,1,2,1)$ \\
$\Delta_2(s) = (1,1,1,1,1,2,1)$ \\
$\Delta_1(s) = (1,1,1,1,1,1,1,1)$ \\
\end{center}
\end{example}

The main result of \cite{HMS} is the following.

\begin{theorem}\rm{(\cite[Theorem 15]{HMS}}) \label{seqofdiff}
Let $n \geq 2$ and $u, v \in {\mathcal{S}}_{n}$ with $s = u^{-1}, t = v^{-1}$. Then $u \, {\sim}_{ss} v$ if and only if $p(s) = p(t)$, i.e., if
${\Delta}_{i}(s) = {\Delta}_{i}(t)$, for each $i \in [n-1]$.
\end{theorem}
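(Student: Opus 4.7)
My plan is to exploit Remark \ref{rem: Delta meaning} to reformulate the embedding condition. With $s=u^{-1}$, one has $k\in Em(u,w)$ if and only if $w_{k+s_m-1}\geq m$ for every $m\in[n]$; equivalently, for each threshold $i\in[n]$, the positions in the window $[k,k+n-1]$ of $w$ with value $\geq i$ include the translated set $\{k+\sigma-1:\sigma\in\Sigma_i(s)\}$. In particular, $Em(u,w)$ depends on the actual position sets $\Sigma_i(s)$ and not merely on the difference vectors $\Delta_i(s)$; reconciling this discrepancy is the crux of both directions.

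For the ``only if'' direction I would argue by contraposition. Assume $\Delta_{i_0}(s)\neq \Delta_{i_0}(t)$ for some $i_0$, and construct a test word $w$ over the two-letter alphabet $\{i_0-1,\,n\}$ whose letters $n$ are placed exactly at the positions of $\Sigma_{i_0}(s)$, padded with letters $(i_0-1)$ on both sides so as to localize embeddings. Then $Em(u,w)$ can be computed directly and exposes the placement of $\Sigma_{i_0}(s)$. Weight preservation forces $f(w)$ to be another word in $\{i_0-1,\,n\}^{*}$ with identical letter counts, and the requirement $Em(v,f(w))=Em(u,w)$ then forces the $n$'s in $f(w)$ to occupy positions whose consecutive differences match $\Delta_{i_0}(s)$. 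But those positions must at the same time be a translate of $\Sigma_{i_0}(t)$, giving $\Delta_{i_0}(t)=\Delta_{i_0}(s)$ and contradicting the assumption.

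For the ``if'' direction I would build $f$ by a level-by-level matching inside each finite weight class $W_{\ell,h}=\{w\in\mathbb{P}^{*}:wt(w)=t^{\ell}x^{h}\}$. The plan is to partition $W_{\ell,h}$ by the nested sequence of sets $(\{j:w_j\geq i\})_{i\geq 1}$, refine until each cell consists of words with a common value of $Em(u,\cdot)$ (and symmetrically for $v$), and then show by descending recursion on $i$ that admissibility of a candidate embedding index depends on the relevant positional data only through the vectors $\Delta_i(s)$. When $p(s)=p(t)$, the refined cells for $u$ and $v$ therefore have equal cardinalities, and patching together bijections within these cells yields the required $f$. The main obstacle is precisely this backward direction: because $\Sigma_i(s)$ and $\Sigma_i(t)$ differ by translations whose magnitudes depend on $i$, no single global permutation of positions realizes $f$, and the recursion must exploit the nested pyramidal structure of the $\Sigma_i$'s to keep the matching consistent across levels.
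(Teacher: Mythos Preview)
The present paper does not prove Theorem~\ref{seqofdiff}; it is quoted from \cite{HMS} (their Theorem~15) and used as a black box throughout. So there is no in-paper proof to compare your proposal against. I can, however, assess your plan on its own merits, and both directions contain genuine gaps.

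In the ``only if'' direction, two steps fail. First, weight preservation fixes only the length and norm of $f(w)$, not its letter multiset: one can, for instance, trade one $n$ and one $i_0-1$ for an $n-1$ and an $i_0$ without changing the weight, so $f(w)$ need not lie in $\{i_0-1,n\}^{*}$. Second, even granting that $f(w)$ has exactly $n-i_0+1$ copies of $n$, your deduction is inverted. From $Em(v,f(w))=\{c+1\}$ one concludes that the $n$'s in $f(w)$ occupy $c+\Sigma_{i_0}(t)$, so their consecutive differences equal $\Delta_{i_0}(t)$, \emph{not} $\Delta_{i_0}(s)$; nothing in your single test word forces $\Delta_{i_0}(t)=\Delta_{i_0}(s)$, and no contradiction results. (A concrete instance: for $u=213$, $v=123$, $i_0=2$ and $w=13131$ one has $Em(u,w)=\{2\}$, yet $f(w)=11331$ satisfies all your constraints.) The argument in \cite{HMS} proceeds instead through minimal clusters $m(u,E)$ for embedding sets $E$ with overlapping indices, whose norms genuinely detect the $\Delta_i$'s; the present paper alludes to this machinery in the proof of Proposition~\ref{ssrigid} via \cite[Theorem~4]{HMS}.

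For the ``if'' direction you correctly isolate the obstacle---the translation between $\Sigma_i(s)$ and $\Sigma_i(t)$ varies with $i$, so no global positional permutation realizes $f$---but you do not resolve it. The assertion that ``admissibility of a candidate embedding index depends on the relevant positional data only through the vectors $\Delta_i(s)$'' is exactly the statement to be proved, and you give no mechanism by which the promised downward recursion keeps the level-by-level matchings consistent across all weight classes simultaneously.
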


Note that Theorem \ref{seqofdiff} holds trivially also for $n=1$ and that for every $u \in {\mathcal{S}}_{n}$ we always have
${\Delta}_{i}(u^{-1}) = (\underbrace{1,1, \ldots, 1}_{n-1})$. \\

To enumerate all super-strong Wilf equivalence classes of ${\mathcal S}_{n}$ it suffices to enumerate all distinct pyramidal sequences of consecutive differences of permutations. In Section \ref{sec:pyramida} the latter are constructed explicitly starting always from the vector ${\Delta}_1 = (\underbrace{1,1, \ldots, 1}_{n-1})$ and using three basic rules for the transition from $\Delta_i$ to $\Delta_{i+1}$, namely either by adding up two consecutive entries of the vector $\Delta_i$ or by deleting the leftmost (resp., the rightmost) entry of $\Delta_i$. If $\Delta_i$ is periodic, i.e., of the form $(\underbrace{d,d,\ldots,d}_{n-i})$, for some $d \in \mathbb{P}$, deletion from the left or from the right has the same effect and this makes direct enumeration of pyramidal sequences problematic.

In Section \ref{sec:trapeza} we overcome this difficulty by defining {\em trapezoidal sequences of consecutive differences} as the sequences $({\Delta}_1, \ldots , {\Delta}_{i+1})$ of the first $i+1$ vectors of the corresponding pyramidal structures such that ${\Delta}_{i+1}$ is the only periodic vector, except from $\Delta_1$. It turns out that trapezoidal sequences can be encoded by certain prefixes of length $i$ of permutations in $n$ letters and the latter are enumerated recursively in Section \ref{sec:din} (see Theorem \ref{numbersdin}). Having enumerated trapezoidal sequences of differences it is not so hard to finally enumerate pyramidal sequences of differences and therefore super-strong Wilf equivalences of permutations (see Theorem \ref{sncounting}).
We are also able to construct recursively a set of representatives of super-strong Wilf equivalence classes using the aforementioned prefixes of permutations (see Corollary \ref{reprss}).
Using similar arguments we also recursively enumerate all super-strong Wilf equivalence classes of a given cardinality (see Theorem \ref{sjn}).

Finally, in Section \ref{sec:shift} we establish the connection between super-strong Wilf equivalence and shift equivalence, for permutations. We prove that two permutations are super-strongly Wilf equivalent if and only if they are shift equivalent using only rigid shifts and not reversals (see Proposition \ref{ssrigid}). This allows us to describe fully the shift equivalence classes of permutations (see Theorem \ref{shiftclass}) and finally enumerate them (Corollary \ref{shiftenum}). The latter answers the exact enumeration problem of shift equivalence classes for permutations, posed in \cite[\S 5]{FGMPX}.

\section{Pyramidal sequences of vectors}
\label{sec:pyramida}
Observing the way pyramidal sequences of consecutive differences of a permutation are constructed, we can see that the transition between $\Delta_i$ and $\Delta_{i+1}$ follows one of {\em three simple steps} (see \cite[Lemma 17]{HMS}), namely: add up two consecutive entries of $\Delta_i$; delete the leftmost entry of $\Delta_i$; delete the rightmost entry of $\Delta_i$.

To enumerate such structures it is more convenient to leave aside their connections to permutations and focus on these simple rules that can indeed construct all possible pyramidal sequences of this type.

\begin{definition}\label{def:pyramidal}
A {\em pyramidal sequence of vectors} is a sequence of the form
\[ p = ({\Delta}_1, \ldots , {\Delta}_i, {\Delta}_{i+1}, \ldots , {\Delta}_{n-1}), \]
where each ${\Delta}_i$ is a sequence of $n-i$ positive integers such that ${\Delta}_1 = (\underbrace{1,1, \ldots, 1}_{n-1})$ and if
${\Delta}_{i} = (d_1, d_2, \ldots , d_{n-i-1}, d_{n-i})$ we have the following three options for $\Delta_{i+1}$:
\[ {\Delta}_{i+1} = \begin{cases}
(d_1,  \ldots, d_{k-1}, d_k + d_{k+1}, d_{k+2}, \ldots, d_{n-i}), \, \text{for some} \, k \in [n-i-1], \text{or} \\
(d_2,  \ldots, d_{n-i-1}, d_{n-i}), \quad \text{or} \\
(d_1, d_2,  \ldots,  d_{n-i-1}). \\
\end{cases} \]
\end{definition}
It is important to note that if $\Delta_i = (\underbrace{d, d, \ldots, d}_{n-i})$ for some $d\in \mathbb{P}$, the second and third options coincide.

Let $\varPi_n$ denote the set of all pyramidal sequences of the above form.

\begin{lemma}
Given a permutation $u \in {\mathcal S}_n$, its pyramidal sequence $p(u^{-1})$ lies in $\varPi_n$.
\end{lemma}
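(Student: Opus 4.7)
The plan is to verify each clause of Definition \ref{def:pyramidal} directly from the construction in Definition \ref{def:SeqDiff}. Writing $s = u^{-1}$, the base case is immediate: since $s$ uses every letter of $[n]$ exactly once, $\Sigma_1(s) = [n]$ and $\Delta_1(s) = (1, 1, \ldots, 1)$ has $n-1$ entries, matching the required form of $\Delta_1$. By construction each $\Delta_i(s)$ is a vector of $n-i$ positive integers (the gaps between consecutive sorted elements of $\Sigma_i(s)$), so the length and positivity constraints in the definition are satisfied.

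The crux is the transition rule. Since $s$ is a permutation, each letter of $[n]$ appears exactly once, so passing from the suffix $s_i s_{i+1}\cdots s_n$ to $s_{i+1}\cdots s_n$ simply removes the single occurrence of $s_i$. Consequently
\[ \Sigma_{i+1}(s) \;=\; \Sigma_i(s) \setminus \{s_i\}. \]
I would then run a three-case analysis on the position of $s_i$ inside the sorted list $\Sigma_i(s) = \{s_i^{(i)} < s_{i+1}^{(i)} < \cdots < s_n^{(i)}\}$. If $s_i$ is the minimum of $\Sigma_i(s)$, the transition removes the leftmost gap and leaves the others untouched. If $s_i$ is the maximum, it removes the rightmost gap. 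If $s_i = s_{i+j-1}^{(i)}$ is interior for some $j$ with $1 < j < n-i+1$, then removing it collapses the two adjacent gaps $s_{i+j-1}^{(i)} - s_{i+j-2}^{(i)}$ and $s_{i+j}^{(i)} - s_{i+j-1}^{(i)}$ into the single gap $s_{i+j}^{(i)} - s_{i+j-2}^{(i)}$, which is their sum. These three outcomes correspond exactly to the three options listed in Definition \ref{def:pyramidal} (with $k = j-1$ in the interior case), so $p(s) \in \varPi_n$.

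I do not expect any genuine obstacle here; the statement is essentially an unfolding of the two definitions, supported by the elementary observation that deleting a single point from a finite totally ordered subset of $\mathbb{P}$ either erases an extreme gap or merges two adjacent ones into their sum. The one item worth flagging is the degenerate situation already acknowledged in the text, namely that when $\Delta_i(s)$ happens to be constant the second and third options of Definition \ref{def:pyramidal} coincide; this does not affect membership in $\varPi_n$ since both options yield the same legal vector.
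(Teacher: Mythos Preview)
Your proof is correct and follows essentially the same approach as the paper: a three-case analysis according to whether the distinguished letter is the minimum, the maximum, or an interior element of the sorted suffix. The only difference is cosmetic---you describe the transition $\Delta_i(s)\to\Delta_{i+1}(s)$ by deleting $s_i$ (which matches Definition~\ref{def:pyramidal} directly), whereas the paper phrases the same trichotomy in the reverse direction $\Delta_i(s)\to\Delta_{i-1}(s)$ by inserting $s_{i-1}$.
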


\begin{proof}
Following the notation of Definition \ref{def:SeqDiff} there are three distinct cases to consider for the transition from ${\Delta}_{i}(s)$ to ${\Delta}_{i-1}(s)$.
If $s_{i-1} < s_{i}^{(i)}$, then the difference $s_{i}^{(i)} - s_{i-1}$ will be added at the left of ${\Delta}_{i}(s)$. If $s_{i-1} > s_{n}^{(i)}$, then the difference $s_{i-1} - s_{n}^{(i)}$ will be added at the right of ${\Delta}_{i}(s)$.
Finally, if there exists an index $k$ such that $s_{k}^{(i)} < s_{i-1} < s_{k+1}^{(i)}$, then the difference $s_{k+1}^{(i)} - s_{k}^{(i)}$ in ${\Delta}_{i}(s)$ will break up into the two summands $s_{i-1} - s_{k}^{(i)}$ and $s_{k+1}^{(i)} - s_{i-1}$ in ${\Delta}_{i-1}(s)$.
\end{proof}

For the converse, we have the following construction.

\begin{lemma}\label{lem: construction}
Given an element $p \in \varPi_n$, there exists  $u \in {\mathcal S}_n$ such that $p = p(u^{-1})$.
\end{lemma}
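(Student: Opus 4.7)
The plan is to reverse the dictionary given by Remark~\ref{rem: Delta meaning}: since $\Delta_i(u^{-1})$ is exactly the vector of consecutive differences of the sorted positions of the letters $\{i, i+1, \ldots, n\}$ in $u$, we will build $u$ by reading $p$ from the bottom to the top and, at level $i$, deciding the position of the letter $i$ from the way $\Delta_i$ transitions to $\Delta_{i+1}$. Concretely, I would initialise $\Sigma_1 = \{1, 2, \ldots, n\}$, whose consecutive differences are $(1,1,\ldots,1) = \Delta_1$ as required, and then maintain inductively an ordered set $\Sigma_i = \{a_1 < a_2 < \cdots < a_{n-i+1}\} \subseteq [n]$ whose consecutive-difference vector equals $\Delta_i$.

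For the inductive step, compare $\Delta_i$ and $\Delta_{i+1}$ and identify which of the three rules of Definition~\ref{def:pyramidal} produced $\Delta_{i+1}$. If $\Delta_{i+1}$ deletes the leftmost entry of $\Delta_i$, set the position of letter $i$ in $u$ to be $a_1$; if it deletes the rightmost entry, set it to $a_{n-i+1}$; and if it sums entries $k$ and $k+1$ of $\Delta_i$, set it to $a_{k+1}$. In every case let $\Sigma_{i+1} = \Sigma_i \setminus \{\mathrm{pos}(i)\}$. The routine check that removing an interior element $a_{k+1}$ from an increasing list merges the two adjacent gaps into their sum, while removing an endpoint drops the corresponding outer gap, shows that the consecutive differences of $\Sigma_{i+1}$ are precisely $\Delta_{i+1}$, preserving the inductive hypothesis. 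After the step $i = n-1$, the singleton $\Sigma_n$ reveals the position of letter $n$; since at each step we deleted exactly one element of the initial set $[n]$, the map $i \mapsto \mathrm{pos}(i)$ is a bijection $[n] \to [n]$, hence a permutation $u \in \mathcal S_n$. By construction $\Sigma_i$ is the sorted set of positions of the letters $\geq i$ in $u$, so Remark~\ref{rem: Delta meaning} gives $\Delta_i(u^{-1}) = \Delta_i$ for each $i$, i.e., $p(u^{-1}) = p$.

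The only real obstacle is well-definedness of the rule-recognition step. I would dispose of it by the short observation that no two distinct rules can ever produce the same $\Delta_{i+1}$ from a fixed $\Delta_i$ \emph{except} when $\Delta_i = (d, d, \ldots, d)$ is periodic, in which case the leftmost- and rightmost-deletion rules coincide. Indeed, equating a sum rule to a deletion, or two sum rules applied at different indices, immediately forces some entry of $\Delta_i$ to vanish, contradicting the requirement that the entries lie in $\mathbb P$. In the periodic case either choice of endpoint is legitimate and produces a valid permutation witnessing $p$; since the lemma only asserts existence of $u$, fixing any convention (say, always pick $a_1$) completes the argument.
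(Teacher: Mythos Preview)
Your proof is correct and follows essentially the same approach as the paper: recognise at each level which of the three rules carried $\Delta_i$ to $\Delta_{i+1}$, use that to pin down the position of one letter, and resolve the unique ambiguity (left vs.\ right deletion when $\Delta_i$ is periodic) by an arbitrary convention. The only cosmetic difference is direction: the paper works top-down, placing letters $n,n-1,\ldots,1$ at relative distances and normalising at the end, whereas you work bottom-up, peeling positions off the fixed set $\Sigma_1=[n]$; your version has the minor advantage that absolute positions in $[n]$ are available from the outset, and you are more explicit than the paper about why the rule-recognition step is well defined.
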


\begin{proof}
In view of Remark \ref{rem: Delta meaning}, we construct such a permutation recursively, using the connection between $\Delta_{i+1}$ and $\Delta_{i}$ in each step. We start from letters $n$ and $n-1$ and place them in distance $d$, where $\Delta_{n-1} = (d)$. There are two options for this. We choose the one with $n-1$ on the left.

Let ${\Delta}_{i} = (d_1, d_2, \ldots , d_{n-i-1}, d_{n-i})$ and suppose, by induction, that all letters greater than $i$ are placed in the word and their distances agree with ${\Delta}_{i+1}$. In view of Definition \ref{def:pyramidal}, there are three options for the transition from $\Delta_i$ to $\Delta_{i+1}$. If the $k$-th coordinate in $\Delta_{i+1}$ is equal to $d_{k} + d_{k+1}$, then letter $i$ is placed in distance
$d_1 + d_2 + \cdots + d_{k-1} + d_k$ to the right of the leftmost letter of the previous step. If $\Delta_{i+1} = (d_2, \ldots, d_{n-i})$
(resp., $\Delta_{i+1} = (d_1, \ldots, d_{n-i-1})$), then letter $i$ is placed in distance $d_1$ (resp., $d_{n-i}$) to the left of the leftmost (resp., to the right of the rightmost) letter of the previous step.

Throughout this step-by-step construction, we have either one or two choices for the placement of each new letter. The latter occurs only on transitions from $\Delta_i = (\underbrace{d, d, \ldots, d}_{n-i})$ to $\Delta_{i+1} = (\underbrace{d, d, \ldots, d}_{n-i-1})$. In this case we choose again to place  letter $i$ at distance $d$ to the left of the leftmost letter of the previous step.
%Note that for $i=n-1$, by convention $\Delta_{i+1}$ is the empty vector.
\end{proof}

\begin{remark}\label{rem:2power}
The above proof is an alternative way to show that the cardinality of each super-strong Wilf equivalence class of permutations is equal to $2^j$ for some $j$. This had been shown with a more technical and difficult proof using the notion of {\em cross equivalence} in \cite[Corollary 27]{HMS}. Moreover, it is evident that the exponent $j$ is equal to the number of transitions from $\Delta_i = (\underbrace{d, d, \ldots, d}_{n-i})$ to $\Delta_{i+1} = (\underbrace{d, d, \ldots, d}_{n-i-1})$, for $i\in [n-1]$. Note that by convention $\Delta_n$ is the empty vector, so there is always such a transition from $\Delta_{n-1}$ to $\Delta_n$.
\end{remark}

We demonstrate the above construction by way of an example.

\begin{example} (cf. \cite[Example 18]{HMS}) \\
Consider the pyramidal sequence of vectors appearing in Example \ref{ex:pyramiddiff}. Following the steps described in Lemma \ref{lem: construction}, one possible construction for $u$ is shown in the following diagram, where the symbol $\circ$ is used to represent smaller letters than the ones appearing in every step---which are shown inside a box here---and each sequence below an under brace denotes the corresponding vector $\Delta_i$.
\[ \underbrace{\fbox{8}\ \circ \ \circ \ \circ \ 9}_4 \rightarrow  \underbrace{8\ \circ \ \fbox{7} \ \circ \ 9\ }_{\ \ 2 \  \ \ \ \ \ \ 2}  \rightarrow \underbrace{\fbox{6} \ \circ \ 8 \ \circ \ 7\ \circ 9}_{2 \ \ \ \ \ 2 \ \ \ \ \ 2} \rightarrow \underbrace{\fbox{5} \ 6\ \circ \ 8 \ \circ \ 7\ \circ 9}_{1 \ \ \ 2 \ \ \ \ \ 2 \ \ \ \ \ 2} \]
 \[\rightarrow \underbrace{5 \ 6\ \circ \ 8 \ \circ \ 7\ \circ 9 \ \fbox{4}}_{1 \ \ \ 2 \ \ \ \ \ 2 \ \ \ \ \ 2 \ \ \ 1} \rightarrow \underbrace{5 \ 6\ \circ \ 8 \ \fbox{3} \ 7\ \circ 9 \ 4}_{1 \ \ \ 2 \ \ \ 1 \ \ 1 \ \ \ 2 \ \ \ 1} \rightarrow \underbrace{5 \ 6\ \fbox{2} \ 8 \ 3 \ 7\ \circ \ 9 \ 4}_{1 \ \ 1 \ \ 1 \ \ 1 \ \ 1 \ \ \ 2 \ \ \ 1} \] \[ \rightarrow \underbrace{5 \ 6\ 2 \ 8 \ 3 \ 7\ \fbox{1} \ 9 \ 4}_{1  \ 1 \ \ 1  \ 1 \ \ 1 \ 1 \ \ 1 \  1} = u.\]

\end{example}
\vspace{0.2cm}

\begin{remark}\label{rem:balls-walls}
It is helpful to view Definition \ref{def:pyramidal} in the following way. Suppose that we originally have $n$ walls which define $n-1$ chambers with one ball in each one of them. This is precisely the situation in ${\Delta}_1$. Then at each step, the transition from ${\Delta}_i$ to ${\Delta}_{i+1}$  can be visualized by the removal of some wall. If this wall is internal, the balls at its left and right chambers will all be concentrated at one unified chamber. On the other hand, if this wall is external, all corresponding balls to its left (if it is a right wall) or to its right (if it is a left one) will be removed. This combinatorial game ends when only two walls, i.e., one chamber, are left. We want to enumerate the number of ways that this can be done, considering that two moves are different if they result to a different set-up of chambers and balls.
\end{remark}

For technical reasons, we extend the notion of consecutive differences for permutations to number sets. More precisely, for a set $X = \{x_1 < x_2 < \cdots < x_{k-1} < x_k\} \subset \mathbb{P}$, let
\[ \Delta(X) = (x_2 - x_1, \ldots, x_k - x_{k-1}) \]
be the vector of consecutive differences in $X$.
For a given vector $\Delta = (d_1, \ldots , d_{k-1})$ and a given $c \in \mathbb{P}$ we also set
\[ \overline{c}(\Delta) = \{ c, c + d_1, \ldots , c + d_1 + \cdots + d_{k-1} \}. \]
Note that, given a set $X$ and a vector $\Delta$, we have that $\Delta = \Delta(X)$ if and only if $X=\overline{c}(\Delta)$, for some $c \in \mathbb{P}$.

\section{Trapezoidal sequences of consecutive differences}
\label{sec:trapeza}
Let $l \geq 2$. A permutation $w$ of length $l$ (resp., a set $A$ of cardinality $l$) is called {\em periodic} when the vector of consecutive differences of ${\rm{alph}}(w)$ (resp., of $A$) is equal to $(\underbrace{d,d,\ldots,d}_{l-1})$, for some $d \in \mathbb{P}$.

\noindent A vector $\Delta$ is called \emph{periodic} if $\Delta = (d,d,\ldots,d)$, for some $d\in\mathbb{P}$.

\begin{definition}\label{def:lower pyramids}
For a fixed $i \in [n-2]$, a {\em trapezoidal sequence of consecutive differences of height $i$} is a sequence
$( {\Delta}_{1}, \ldots , {\Delta}_{i}, {\Delta}_{i+1} )$ of the $i+1$ initial parts of a pyramidal sequence in $\varPi_n$ such that:
\begin{itemize}
\item $\Delta_{i+1}$ is periodic, and
\item $\Delta_j$ is not periodic, for all $j\in [2,i]$.
\end{itemize}
Let $\varDelta_{i,n}$ denote the set of all trapezoidal sequences of height $i$, inherited by pyramidal sequences of $\varPi_n$.
\end{definition}

\begin{example}\label{ex:lowerpyramids}
The sequence of differences
\begin{center}
$(2,2,2)$ \\
$(1,2,2,2)$ \\
$(1,2,2,2,1)$ \\
$(1,2,1,1,2,1)$ \\
$(1,1,1,1,1,2,1)$ \\
$(1,1,1,1,1,1,1,1)$ \\
\end{center}
lies in $\varDelta_{5,9}$; it is the sequence of the $6$ initial parts of the pyramidal sequence in Example \ref{ex:pyramiddiff}.
\end{example}

We will encode trapezoidal sequences of consecutive differences of height $i$ using a certain type words of length $i$, defined as follows.

\begin{definition}\label{def:prefixes}
Let $n \geq 3$. For $i \in [n-2]$ we define the set ${\mathcal D}_{i,n}$ as the set of all words $u = u_1 u_2 \cdots u_i$ of length $i$ such that:
\begin{itemize}
\item $u$ appears as a non-empty prefix of a permutation $w$ in $\mathcal{S}_n$, whose remaining $(n-i)$-letter suffix is periodic, i.e., the set $[n] \setminus {\rm{alph}}(u)$ is periodic, and
\item $u$ is the prefix of smallest length with the above property, i.e., for all $j < i$ the set $[n] \setminus \{ u_1 , u_2 , \ldots , u_j \}$ is not periodic.
\end{itemize}
\end{definition}

\begin{example}
The word $u = 73591$ lies in ${\mathcal D}_{5,9}$. Indeed, the set $[9] \setminus {\rm{alph}}(u) = [9] \setminus \{ 1, 3, 5, 7, 9 \} = \{2, 4, 6, 8 \}$ is periodic since its vector of consecutive differences is $(2,2,2)$. On the other hand, for each of the proper prefixes $u' \in \{7359, 735, 73, 7 \}$ of $u$, it is easy to check that the set $[9] \setminus {\rm{alph}}(u')$ is not periodic.
\end{example}

\vspace{0.1in}

The following result yields a recursive algorithm for the construction of sets ${\mathcal D}_{i,n}$.

\begin{proposition}
For $n=3$ we have ${\mathcal D}_{1,3} = \{1, 2, 3\}$. For $n \geq 4$ we construct ${\mathcal D}_{i,n}$ recursively as follows:
\begin{enumerate}
\item For $i = 1$ we have ${\mathcal D}_{1,n} = \{1, n\}$.
\item For $i \geq 2$, ${\mathcal D}_{i,n}$ is the set of prefixes $u$ of length $i$ of permutations in $\mathcal{S}_n$ whose corresponding suffix is periodic and for each proper prefix $u'$ of $u$ of length $1\leq j<i$ we have $u' \notin {\mathcal D}_{j,n}$.
\end{enumerate}
\end{proposition}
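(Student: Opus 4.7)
The plan is to handle the base cases by direct inspection and then, for the recursive step, to argue by induction on $i$ that the set produced by the rule in item~(2) coincides with ${\mathcal D}_{i,n}$ as given in Definition~\ref{def:prefixes}.

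For the base cases I would first treat $n=3$: since $[n-2]=\{1\}$, only height $i=1$ is in scope, so Definition~\ref{def:prefixes} reduces to checking that $[3]\setminus\{u_1\}$ is periodic for each candidate $u_1$, and all three singletons survive because every two-element subset has a single-entry, hence periodic, difference vector. For $n\geq 4$ and $i=1$, the complement $[n]\setminus\{u_1\}$ has at least three elements, so removing $u_1=1$ or $u_1=n$ leaves an interval with all-ones difference vector, whereas removing any $u_1\in[2,n-1]$ produces a difference vector containing both an entry $2$ and an entry $1$, which is therefore non-periodic. This pins down ${\mathcal D}_{1,n}=\{1,n\}$.

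For $i\geq 2$, denote by ${\mathcal D}'_{i,n}$ the set produced by the recursive rule and establish ${\mathcal D}_{i,n}={\mathcal D}'_{i,n}$ by induction on $i$. The forward inclusion is immediate: if $u\in{\mathcal D}_{i,n}$, then the suffix condition is the first bullet of Definition~\ref{def:prefixes}, while the second bullet forces $[n]\setminus\mathrm{alph}(u')$ to be non-periodic for every proper prefix $u'$, so in particular $u'\notin{\mathcal D}_{j,n}={\mathcal D}'_{j,n}$ by the induction hypothesis. For the reverse inclusion, suppose $u\in{\mathcal D}'_{i,n}$ and, for a contradiction, that some proper prefix $u'$ of length $j<i$ satisfies $[n]\setminus\mathrm{alph}(u')$ periodic. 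Among the proper prefixes of $u$ with this property there is a shortest one, say $u''$ of length $k\leq j$. By minimality of $k$, this $u''$ meets both bullets of Definition~\ref{def:prefixes}, hence $u''\in{\mathcal D}_{k,n}={\mathcal D}'_{k,n}$ by induction, contradicting the defining property of ${\mathcal D}'_{i,n}$.

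The only genuine obstacle is this last equivalence: a priori the recursive clause ``$u'\notin{\mathcal D}_{j,n}$'' is weaker than the original non-periodicity requirement of Definition~\ref{def:prefixes}, and one has to observe that any proper prefix of $u$ whose complement is periodic forces a shortest such initial segment to lie in ${\mathcal D}$ at its own length. Once this monotonicity is in place, the induction closes immediately and the recursion is fully justified.
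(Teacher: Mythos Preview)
Your argument is correct and follows essentially the same route as the paper's proof: direct verification of the base cases, the immediate forward inclusion from Definition~\ref{def:prefixes}, and the key reverse inclusion via a minimal-counterexample argument (choosing the shortest proper prefix with periodic complement and observing it must lie in the corresponding ${\mathcal D}_{k,n}$, a contradiction). The only cosmetic differences are that you package the argument as an induction on $i$ with an auxiliary set ${\mathcal D}'_{i,n}$ (which is not strictly needed, since the recursive clause already refers to the ${\mathcal D}_{j,n}$ of Definition~\ref{def:prefixes}), while the paper treats the length-$1$ prefix separately before invoking well-ordering.
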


\begin{proof}
For $n=3$ the result is evident. For $n \geq 4$ and $i=1$ it follows immediately. Let $i > 1$ and $u \in {\mathcal D}_{i,n}$. Then each proper prefix $u'$ of $u$ has the property that
$[n] \setminus {\rm{alph}}(u')$ is not periodic, therefore $u' \notin {\mathcal D}_{j,n}$, for $j = |u'|$.

For the reverse direction suppose that $u$ is a prefix of some permutation on $n$ letters such that $[n] \setminus {\rm{alph}}(u)$ is periodic and for each proper prefix $u'$ of $u$ we have $u' \notin {\mathcal D}_{j,n}$, for $j = |u'|$. Then we must show that $[n] \setminus {\rm{alph}}(u')$ is not periodic. If $j=1$ then $u' \notin \{1, n \}$ and the result follows. Thus we may suppose that $j > 1$ and for the sake of contradiction let us assume that
$[n] \setminus {\rm{alph}}(u')$ is periodic. Consider the set
\[ L = \{ |t| \: : \: t \: {\rm{proper \, prefix \,of}} \: u \quad {\rm{and}} \quad [n] \setminus {\rm{alph}}(t) \:  {\rm{is \: periodic}} \}. \]
Clearly $L \neq \emptyset$, as $|u'| \in L$. By the well-ordering principle, $\min(L)$ exists. Set $\min(L) = l$. Then $l > 1$ and there exists a proper prefix $x$ of $u$ of length $l$ such that $[n] \setminus {\rm{alph}}(x)$ is periodic. Furthermore, for each proper prefix $y$ of $x$ we get
$[n] \setminus {\rm{alph}}(y)$ is not periodic. Then by definition $x \in {\mathcal D}_{l,n}$. On the other hand, as $x$ is a proper prefix of $u$, by our assumption $x \notin {\mathcal D}_{l,n}$, a contradiction.
\end{proof}

\begin{example}
For $n=5$ by definition we get ${\mathcal D}_{1,5} = \{1,5\}$. To calculate the set of prefixes ${\mathcal D}_{2,5}$ observe that the only possible vectors of differences for the corresponding periodic sets in this case are either $(1,1)$ or $(2,2)$. These correspond to the sets $\{1,2,3\}, \{2,3,4\}, \{3,4,5\}$ and $\{1,3,5\}$, respectively. All possible prefixes with letters in the complements of the above sets are $45, 54; 15, 51; 12, 21$ and $24, 42$, respectively. The prefixes $54, 15, 51,$ and $12$ are rejected since they have a proper prefix in ${\mathcal D}_{1,5}$. Hence, we obtain that
${\mathcal D}_{2,5} = \{21, 24, 42, 45\}$.
\end{example}

\vspace{0.2cm}

We are now ready to show that each trapezoidal sequence in $\varDelta_{i,n}$ can be encoded as a word in ${\mathcal D}_{i,n}$. The main idea is the following. Each word in ${\mathcal D}_{i,n}$ may be viewed as a sequence of $i$ walls that are going to be deleted one after the other, starting from the original $n$ walls containing $n-1$ balls and ending up with an equal number of balls in each chamber, a set-up described in Remark \ref{rem:balls-walls}. Taking into consideration the way that balls are separated after each wall deletion, a unique trapezoidal sequence in $\varDelta_{i,n}$ is constructed bottom-up. This turns out to be a well defined map which we will show to be a bijection.

\begin{proposition}\label{prop:prefix<->trapezoid}
%Let $n \geq 4$ and $i \in [2, n-2]$.
There is a bijection between the set of prefixes ${\mathcal D}_{i,n}$ and the set of trapezoidal sequences of  consecutive differences $\varDelta_{i,n}$.
\end{proposition}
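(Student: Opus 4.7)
The plan is to use the balls-and-walls interpretation of Remark \ref{rem:balls-walls}: each letter $u_j$ of a word $u \in \mathcal{D}_{i,n}$ will be read as the wall removed at step $j$ of the game. Define the forward map $\phi : \mathcal{D}_{i,n} \to \varDelta_{i,n}$ by sending $u = u_1 u_2 \cdots u_i$ to the tuple $(\Delta_1, \ldots, \Delta_{i+1})$ obtained by setting $W_0 = [n]$, $W_j = W_{j-1} \setminus \{u_j\}$, and $\Delta_{j+1} = \Delta(W_j)$, the vector of consecutive differences of the sorted set $W_j$. First I would check that $\phi(u) \in \varDelta_{i,n}$: each single wall removal realises one of the three moves of Definition \ref{def:pyramidal}, according as $u_j$ is an interior, leftmost, or rightmost wall of $W_{j-1}$; the terminal vector $\Delta_{i+1}$ corresponds to $W_i = [n] \setminus {\rm{alph}}(u)$, which is periodic by the first clause of Definition \ref{def:prefixes}; and for each $j \in [2, i]$ the vector $\Delta_j$ is non-periodic by the minimality clause.

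For the inverse $\psi : \varDelta_{i,n} \to \mathcal{D}_{i,n}$ I would recover the wall removed at each step from the pair $(\Delta_j, \Delta_{j+1})$. Starting with $W_0 = [n]$: if $\Delta_{j+1}$ arises from $\Delta_j$ by summing its $k$-th and $(k{+}1)$-st entries, set $u_j$ to the $(k{+}1)$-st smallest element of $W_{j-1}$; if by deleting the leftmost (resp.\ rightmost) entry of $\Delta_j$, set $u_j = \min W_{j-1}$ (resp.\ $u_j = \max W_{j-1}$); then update $W_j = W_{j-1} \setminus \{u_j\}$. The critical point is that the transition type is readable off the pair $(\Delta_j, \Delta_{j+1})$ alone: an ``add'' move preserves the norm $||\Delta_j||$ while a deletion strictly decreases it, and the delete-left and delete-right operations produce distinct vectors unless $\Delta_j$ is periodic. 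By Definition \ref{def:lower pyramids} the only vectors in a trapezoidal sequence that may be periodic are $\Delta_1$ and $\Delta_{i+1}$, so for every intermediate transition ($j \in [2, i]$) the recovery is automatic; for the initial transition $\Delta_1 \to \Delta_2$, either $\Delta_2$ is non-periodic---forcing an ``add'' move which pins down $u_1$ uniquely---or the ambiguity is resolved by the datum of the initial move inherited from the underlying pyramidal sequence in $\varPi_n$.

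Finally, the identities $\psi \circ \phi = \mathrm{id}$ and $\phi \circ \psi = \mathrm{id}$ follow directly from the step-by-step matching of wall removals with $\Delta$-transitions, and the image $\psi(p)$ satisfies both clauses of Definition \ref{def:prefixes}: periodicity of $\Delta_{i+1}$ translates into periodicity of $[n] \setminus {\rm{alph}}(\psi(p))$, while non-periodicity of each intermediate $\Delta_j$ delivers the minimality condition. I expect the main obstacle to be the bookkeeping around the periodic initial vector $\Delta_1$ in the reverse direction, which is handled by the observations above together with careful tracking of the remaining wall set $W_{j-1}$ at each step.
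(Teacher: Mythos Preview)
Your forward map $\phi$ is identical to the paper's (it writes $X_j$ for your $W_{j-1}$), and your inverse $\psi$ is the same wall-recovery, phrased via the three-way split add / delete-left / delete-right where the paper uses the binary test ``is $\Delta_{j+1}$ equal to $\Delta_j$ with its leftmost entry dropped?''. For $j\in[2,i]$ the non-periodicity of $\Delta_j$ makes the two descriptions equivalent, and for $j=1$ with $i\ge 2$ the non-periodicity of $\Delta_2$ forces an add move, exactly as you argue.

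The one soft spot is your fallback when $\Delta_2$ is periodic: there is no ``datum of the initial move inherited from the underlying pyramidal sequence''. An element of $\varDelta_{i,n}$ is just the tuple $(\Delta_1,\ldots,\Delta_{i+1})$ and carries no memory of which of the three moves produced each transition, so nothing extra can be ``inherited''. That case arises precisely when $i=1$, and there the bijection in fact fails for $n\ge 4$: one has $\mathcal D_{1,n}=\{1,n\}$, while $\varDelta_{1,n}=\bigl\{\bigl((1,\ldots,1),(1,\ldots,1)\bigr)\bigr\}$ is a singleton. The paper's own construction has the same gap at $i=1$ (and what appears to be an index slip, writing $j\in[2,i]$ where $j\in[1,i]$ is needed to define $Y_2$); fortunately Theorem~\ref{sncounting}, Corollary~\ref{reprss} and Theorem~\ref{sjn} only invoke the bijection for $i\ge 2$, where both your argument and the paper's go through cleanly. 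So: drop the appeal to an inherited datum, restrict the claim to $i\ge 2$, and your proof matches the paper's.
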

\begin{proof}
Suppose that $u = u_1 \cdots u_i \in {\mathcal D}_{i,n}$. We construct a unique element $p \in \varDelta_{i,n}$ in the following way.
First, we define sets $X_j$, $j \in [i+1]$, inductively as
\begin{equation} \label{eq: uXp}
X_1 = [n] \text{ and } X_{j+1} = X_j \setminus \{u_j\}, \text{ for } j \in [i].
\end{equation}
The image of $u$ is then defined to be
\[\phi(u) = p = (\Delta_1, \ldots, \Delta_j, \ldots, \Delta_{i+1}), \text{ where } \Delta_j = \Delta(X_j).\]

For the reverse direction, let $p = (\Delta_1, \ldots, \Delta_j, \Delta_{j+1}, \ldots, \Delta_{i+1}) \in \varDelta_{i,n}$.
Suppose that $\Delta_{j+1} = (d_1, \ldots, d_{n-j-1})$ and $\Delta_{j} = (e_0, e_1,  \ldots, e_{n-j-1})$.
We define sets $Y_j$ inductively as follows. Set $Y_1 = [n]$. For $j \in [2, i]$, let $y_j = \min (Y_j)$ and set
\begin{equation} \label{eq: pYu}
 Y_{j+1} = \begin{cases}
\overline{y_j}(\Delta_{j+1}), \text{ if } \Delta_{j+1} \neq  ( e_1,  \ldots, e_{n-j-1})\\
\overline{y_j + e_0}(\Delta_{j+1}), \text{ if } \Delta_{j+1} =  (  e_1,  \ldots, e_{n-j-1}).\\
\end{cases}
\end{equation}
We then set $\psi(p) = u = u_1 \cdots u_j \cdots u_i$, where $\{u_j\} = Y_{j+1} \setminus Y_j$.

By construction and in view of Definitions \ref{def:lower pyramids} and \ref{def:prefixes}, we obtain that $\phi(u) \in \varDelta_{i,n}$ and
$\psi(p) \in {\mathcal D}_{i,n}$.
\vspace{0.2cm}

It remains to show that $\phi$ and $\psi$ are inverses of each other, i.e., ${\psi}({\phi}(u)) = u$ and ${\phi}({\psi}(p)) = p$,
for each $u \in {\mathcal D}_{i,n}$ and $p \in \varDelta_{i,n}$.

For the former equality it suffices to demonstrate inductively on $j$ that $Y_j = X_j$, given the set $X_j$ in \eqref{eq: uXp}, for $j \in [i]$ and the recursive construction of the sets $Y_j$ in \eqref{eq: pYu}.
For $j=1$ we immediately get $Y_1 = X_1 = [n]$. Assuming that $Y_k = X_k$, and thus $y_k = \min(Y_k) = \min(X_k) = x_k$, we have to show that
$Y_{k+1} = X_{k+1}$.
If $\Delta_{k+1} \neq  ( e_1,  \ldots, e_{n-k-1})$, then $Y_{k+1} = \overline{y_k}(\Delta_{k+1}) = \overline{x_k}(\Delta_{k+1})$.
Now since ${\Delta}_{k+1} = {\Delta}(X_{k+1})$ we get $\overline{x_k}(\Delta_{k+1})= X_{k+1}$ and the result follows.
If, on the other hand, $\Delta_{k+1} =  ( e_1,  \ldots, e_{n-k-1})$, then we obtain that $Y_{k+1} = \overline{y_k + e_0}(\Delta_{k+1}) = \overline{x_k + e_0}(\Delta_{k+1})$. Clearly $y_k \notin Y_{k+1}$ and in fact we have $\{ y_k \} = Y_k \setminus Y_{k+1}$ and furthermore $y_k = u_k$.
Since $X_{k+1} = X_k \setminus \{u_k \}$, we obtain $X_{k+1} = Y_{k+1}$.

For the latter equality, our arguments are reversed starting with the given set $Y_j$ in \eqref{eq: pYu} and using the recursive construction of the sets $X_j$ in \eqref{eq: uXp}.
\end{proof}

\begin{example} The image of the word $u = 7 3 5 9 1 \in \mathcal{D}_{5,9}$ via the bijection $\phi$ is the trapezoidal sequence of differences appearing bottom-up on the right column of the following table.
\[
\begin{array}{c c l c c}
&  &   |_{1} \bullet |_2 \bullet |_3  \bullet |_4  \bullet |_5  \bullet |_6 \bullet |_7 \bullet |_8 \bullet |_9  & \rightarrow & \Delta_1 = (1, 1, 1, 1, 1, 1, 1, 1) \\
\color{red} 7 & \rightarrow  & |_{1} \bullet |_2 \bullet |_3  \bullet |_4  \bullet |_5  \bullet |_6 \bullet \hspace{0.1cm} { }_{\color{red} 7} \bullet |_8 \bullet |_9  & \rightarrow & \Delta_2 = (1,1,1,1,1,2,1)  \\
\color{red}  3  & \rightarrow  & |_{1} \bullet |_2 \bullet \hspace{0.1cm} { }_{\color{red} 3}  \bullet |_4  \bullet |_5  \bullet |_6 \bullet \hspace{0.1cm} { }_{\color{red} 7} \bullet |_8 \bullet |_9  & \rightarrow & \Delta_3 = (1,2,1,1,2,1) \\
 \color{red}  5  & \rightarrow & |_{1} \bullet |_2 \bullet \hspace{0.1cm} { }_{\color{red} 3}  \bullet |_4  \bullet \hspace{0.1cm} { }_{\color{red} 5}  \bullet |_6 \bullet \hspace{0.1cm} { }_{\color{red} 7} \bullet |_8 \bullet |_9  & \rightarrow & \Delta_4 = (1,2,2,2,1) \\
\color{red} 9  & \rightarrow & |_{1} \bullet |_2 \bullet \hspace{0.1cm} { }_{\color{red} 3}  \bullet |_4  \bullet \hspace{0.1cm} { }_{\color{red} 5}  \bullet |_6 \bullet \hspace{0.1cm} { }_{\color{red} 7} \bullet |_8 \hspace{0.4cm} { }_{\color{red} 9}  & \rightarrow & \Delta_5 = (1,2,2,2)  \\
\color{red} 1  & \rightarrow & \hspace{0.1cm} { }_{\color{red} 1} \hspace{0.35cm} |_2 \bullet \hspace{0.1cm} { }_{\color{red} 3}  \bullet |_4  \bullet \hspace{0.1cm} { }_{\color{red} 5}  \bullet |_6 \bullet \hspace{0.1cm} { }_{\color{red} 7} \bullet |_8 \hspace{0.4cm} { }_{\color{red} 9}  & \rightarrow & {\Delta}_{6} = (2, \, 2 , \, 2)  \\
\end{array}
\]
\end{example}

Now set $d_{i,n} = |{\mathcal D}_{i,n}| = |\varDelta_{i,n}|$ and let $s_{n}$ be the number of distinct super-strong Wilf equivalence classes of ${\mathcal S}_{n}$. It is trivial to see that $s_1 = s_2 = 1$ and $s_3 = 2$. For $n \geq 4$ the enumeration of super-strong Wilf equivalence classes is achieved using the numbers $d_{i,n}$ as follows.

\begin{theorem} \label{sncounting}
Let $n \geq 4$. The number $s_n$ of distinct super-strong Wilf equivalence classes of ${\mathcal S}_{n}$ is given by the recursive formula
\begin{equation} \label{eq: sncount}
 s_n =  s_{n-1} + \sum_{i=2}^{n-2} d_{i,n}\cdot s_{n-i}.
\end{equation}
\end{theorem}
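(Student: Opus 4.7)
The plan is to partition $\varPi_n$ according to the smallest index $i \geq 1$ for which $\Delta_{i+1}$ is periodic. Since $\Delta_{n-1}$ has length one it is automatically periodic, so such a minimal $i$ always exists and lies in $[1, n-2]$; the resulting stratification of $\varPi_n$ is therefore exhaustive. For that $i$, the initial segment $(\Delta_1, \ldots, \Delta_{i+1})$ is, by definition, a trapezoidal sequence of height $i$ in $\varDelta_{i,n}$, and the entire element of $\varPi_n$ is determined by this segment together with the tail $(\Delta_{i+1}, \Delta_{i+2}, \ldots, \Delta_{n-1})$.

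The key step is to show that, once a trapezoidal sequence of height $i$ is fixed, the number of legal tails equals $s_{n-i}$. Writing $\Delta_{i+1} = (d, d, \ldots, d)$ with $n - i - 1$ entries, I would observe that the three rules of Definition \ref{def:pyramidal}---adding two consecutive entries, deleting the leftmost, deleting the rightmost---commute with multiplying every entry of a vector by a positive scalar. Entry-wise division by $d$ therefore provides a bijection between tails continuing $\Delta_{i+1}$ and pyramidal sequences in $\varPi_{n-i}$, whose initial vector must be $(1, 1, \ldots, 1)$ with $n - i - 1$ entries. Combining Lemma \ref{lem: construction} with Theorem \ref{seqofdiff} gives $|\varPi_{n-i}| = s_{n-i}$, so the number of tails extending each such $\Delta_{i+1}$ is exactly $s_{n-i}$.

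It remains to count trapezoidal sequences of each height. For $i \in [2, n-2]$, Proposition \ref{prop:prefix<->trapezoid} already yields $|\varDelta_{i,n}| = d_{i,n}$. For the degenerate case $i = 1$ I would verify by hand that, starting from $\Delta_1 = (1, 1, \ldots, 1)$ of length $n - 1$ with $n \geq 4$, the add-adjacent rule produces a vector with a single entry $2$ among $n - 3$ ones, which is non-periodic, while the two deletion rules both collapse to the periodic vector $(1, 1, \ldots, 1)$ of length $n - 2$. Hence there is exactly one trapezoidal sequence of height one, contributing $1 \cdot s_{n-1} = s_{n-1}$ to the count.

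Summing the contributions over all heights gives equation \eqref{eq: sncount}. The main obstacle is making the rescaling bijection of the second paragraph rigorous---specifically, checking that the scaled entries remain positive integers (because every entry of every vector in the tail is a sum of consecutive copies of $d$) and that the three moves are faithfully intertwined by division by $d$. Once that bijection is in hand, the theorem follows by a straightforward sum over the strata indexed by minimal height.
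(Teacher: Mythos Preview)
Your proposal is correct and follows essentially the same approach as the paper: both partition $\varPi_n$ by the minimal height $i$ at which $\Delta_{i+1}$ becomes periodic, handle the case $i=1$ by observing that $\varDelta_{1,n}$ is a singleton, and for $i\ge 2$ use the entry-wise division by $d$ to biject the tail $(\Delta_{i+1},\ldots,\Delta_{n-1})$ with $\varPi_{n-i}$, after checking inductively that every entry above level $i+1$ is a multiple of $d$. Your identification of the one technical point to verify (divisibility by $d$ of all tail entries, and compatibility of the three moves with scaling) is exactly what the paper isolates as its ``first claim'' before writing down the map $\tau_{i,n}$.
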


\begin{proof}
Let ${\mathcal T}_{i,n} = \{ (\Delta_1, \ldots , \Delta_i, \Delta_{i+1}, \ldots, \Delta_{n-1}) \in {\varPi}_{n} \, :
                                \, (\Delta_1, \ldots , \Delta_i, \Delta_{i+1}) \in \varDelta_{i,n} \}$, for $i \in [n-2]$.
We clearly have
\begin{equation} \label{eq:partition} {\varPi}_{n} =  {\mathcal T}_{1,n} \sqcup {\mathcal T}_{2,n} \sqcup \cdots \sqcup {\mathcal T}_{i,n} \sqcup
                                                                                     \cdots \sqcup {\mathcal T}_{n-2,n}. \end{equation}
Observe that ${\varDelta}_{1,n}$ consists of just one element, namely $(\Delta_1, \Delta_2 )$, where $\Delta_1 = (\underbrace{1,1,\ldots,1}_{n-1})$ and
$\Delta_2 = (\underbrace{1,1,\ldots,1}_{n-2})$. Then there is an immediate bijective correspondence between $\varPi_{n-1}$ and ${\mathcal T}_{1,n}$, therefore $|{\mathcal T}_{1,n}| = s_{n-1}$.

Now let $i \in [2, n-2]$. Consider a pyramidal sequence in ${\mathcal T}_{i,n}$.
Then there exists a $d \in [n-1]$ such that $\Delta_{i+1} = (\underbrace{d,d,\ldots,d}_{n-i-1})$ and for all $j \in [2, i]$ there exists no $e$ such that
$\Delta_{j} = (\underbrace{e,e,\ldots,e}_{n-j})$.

Our first claim is that all entries in $\Delta_k$ for $k \in [i+1, n-1]$ will be multiples of $d$. Indeed, suppose that $\Delta_k = (d_1, \ldots , d_{n-k})$, where by induction it is assumed that $d \, | \, d_i$, for all $l \in [n-k]$. If $\Delta_{k+1}$ is equal either to $(d_2, \ldots , d_{n-k})$ or $(d_1, \ldots , d_{n-k-1})$, then the result follows immediately. On the other hand, if $\Delta_{k+1} = (d_{1} , \ldots , d_{m-1}, d_{m} + d_{m+1}, d_{m+2}, \ldots , d_{n-k})$ it is enough to show that $d \, | \, (d_{m} + d_{m+1})$, which follows inductively from $d \, | \, d_{m}$ and $d \, | \, d_{m+1}$. \\

We now define a map ${\tau}_{i,n} : {\mathcal T}_{i,n} \rightarrow \varDelta_{i,n} \times \varPi_{n-i}$ as
{\footnotesize
\begin{equation} \label{eq:bijection_t} {\tau}_{i,n}( \Delta_{1}, \ldots , \Delta_{i}, \Delta_{i+1}, \ldots , \Delta_{n-1}) = \big( (\Delta_{1}, \ldots , \Delta_{i}, \Delta_{i+1}),
\frac{1}{d} \cdot (\Delta_{i+1}, \ldots , \Delta_{n-1}) \big) , \end{equation}}
where $\Delta_{i+1} =  (\underbrace{d,d,\ldots,d}_{n-i-1})$. Our previous claim ensures that ${\tau}_{i,n}$ is well defined. Furthermore, it is also a bijection whose inverse is the map $\rho_{i, n} : \varDelta_{i,n} \times \varPi_{n-i}
\rightarrow {\mathcal T}_{i,n}$ defined as
{\footnotesize
\[ \rho_{i,n} \big( (\Delta_{1}, \ldots , \Delta_{i}, \Delta_{i+1}), ({\Delta}^{'}_1, \ldots , {\Delta}^{'}_{n-i-1}) \big) =
(\Delta_1, \ldots , \Delta_i, \Delta_{i+1}, d \cdot {\Delta}^{'}_1, \ldots , d \cdot {\Delta}^{'}_{n-i-1}), \] }
\vspace{-0.2cm}

\noindent where $\Delta_{i+1} =  (\underbrace{d,\ldots,d}_{n-i-1})$. By direct enumeration, it follows that $|{\mathcal T}_{i,n}| = d_{i,n} \cdot s_{n-i}$, for $i \in [2, n-2]$.
\end{proof}

\vspace{0.2cm}

Our next goal is to find a set of representatives of super-strong Wilf equivalence classes in $\mathcal{S}_n$. To do this, we start with the notion of {\em{the reduced form of a permutation}}. For a permutation $\tau$, its reduced form ${\rm{red}}(\tau)$ is the permutation obtained by replacing the smallest entry of $\tau$ by $1$, the second smallest by $2$, and so on.

For technical reasons we define ${\mathcal E}_{i,n}$ to be
\[ {\mathcal E}_{i,n} = \begin{cases}
\{ 1 \}, \quad \mbox{if} \quad i=1 \\
{\mathcal D}_{i,n}, \quad \mbox{if} \quad i \in [2, n-2]. \\
\end{cases} \]

\begin{corollary} \label{reprss}
Let ${\mathcal R}_{n}$ be the set of permutations in ${\mathcal S}_{n}$ defined recursively as
\[ {\mathcal R}_n = \{ u \cdot v \: : \: u \in {\mathcal E}_{i,n}; \,\;  {\rm{red}}(v) \in {\mathcal R}_{n-i}; \,\; i \in [n-2] \}. \]
Then a set of super-strong Wilf equivalence classes representatives in ${\mathcal S}_n$ is given by the set
\[ {\mathcal C}_n = \{ w^{-1} \: : \:  w \in {\mathcal R}_{n} \}. \]
\end{corollary}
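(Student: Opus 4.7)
My approach is by induction on $n$, showing that the map $\Psi_n\colon \mathcal{R}_n \to \varPi_n$ defined by $w \mapsto p(w)$ is a bijection. The corollary then follows from Theorem~\ref{seqofdiff}: for $\pi = w^{-1} \in \mathcal{C}_n$ the invariant of the super-strong Wilf class of $\pi$ is $p(\pi^{-1}) = p(w) = \Psi_n(w)$, so bijectivity of $\Psi_n$ is equivalent to $\mathcal{C}_n$ meeting every such class in exactly one element. The base cases $n \in \{1,2,3\}$, where the stated recursion degenerates, are handled directly by taking $\mathcal{R}_1 = \{1\}$, $\mathcal{R}_2 = \{12\}$, $\mathcal{R}_3 = \{123, 213\}$ and checking by inspection that their pyramidal sequences enumerate $\varPi_n$.

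For the inductive step, the crux is the structural identity
\begin{equation*}
\tau_{i,n}\bigl(p(u \cdot v)\bigr) \;=\; \bigl(\phi(u),\; p(\mathrm{red}(v))\bigr)
\end{equation*}
for every $w = u \cdot v \in \mathcal{R}_n$ with $u \in \mathcal{E}_{i,n}$ and $\mathrm{red}(v) \in \mathcal{R}_{n-i}$, where $\tau_{i,n}$ is the bijection of \eqref{eq:bijection_t} and $\phi$ is the one from Proposition~\ref{prop:prefix<->trapezoid}. To prove it, I would compute $\Sigma_j(w)$ directly from Definition~\ref{def:SeqDiff}: for $j \leq i+1$ one has $\Sigma_j(w) = [n] \setminus \{u_1, \ldots, u_{j-1}\}$, which is precisely the set $X_j$ used to define $\phi(u)$, so the first $i+1$ entries of $p(w)$ coincide with $\phi(u)$. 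For $j > i+1$, $\Sigma_j(w) \subseteq \mathrm{alph}(v) = [n] \setminus \mathrm{alph}(u)$, and since the latter is periodic with common difference $d$ (the entry of the periodic vector $\Delta_{i+1}(w)$), the order-preserving bijection $\mathrm{red}\colon \mathrm{alph}(v) \to [n-i]$ rescales every consecutive difference by the factor $1/d$, giving $\Delta_j(w) = d \cdot \Delta_{j-i}(\mathrm{red}(v))$. Substituting these two cases into the defining formula of $\tau_{i,n}$ yields the identity.

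Uniqueness of the decomposition $w = u \cdot v$ follows from Definition~\ref{def:prefixes}: if $w = u \cdot v = u' \cdot v'$ with $u \in \mathcal{E}_{i,n}$, $u' \in \mathcal{E}_{j,n}$, and, WLOG, $i < j$, then $u$ is a proper prefix of $u'$, which for $i \geq 2$ contradicts the minimality clause of $\mathcal{D}_{j,n}$, and for $i = 1$ forces $u'_1 = 1$, contradicting $u'_1 \notin \mathcal{D}_{1,n} = \{1,n\}$. Consequently, writing $\mathcal{R}_n^{(i)} = \{w \in \mathcal{R}_n : |u| = i\}$, the map $\Psi_n$ decomposes as a disjoint union of composites
\begin{equation*}
\mathcal{R}_n^{(i)} \longrightarrow \mathcal{E}_{i,n} \times \mathcal{R}_{n-i} \xrightarrow{\phi \times \Psi_{n-i}} \varDelta_{i,n} \times \varPi_{n-i} \xrightarrow{\;\tau_{i,n}^{-1}\;} \mathcal{T}_{i,n},
\end{equation*}
each of which is a bijection by Proposition~\ref{prop:prefix<->trapezoid} (for $i \geq 2$), by the trivial identity $|\mathcal{E}_{1,n}| = |\varDelta_{1,n}| = 1$ (for $i = 1$), and by the inductive hypothesis on $\Psi_{n-i}$, while \eqref{eq:partition} assembles them into a bijection $\mathcal{R}_n \to \varPi_n$. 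The main obstacle I expect is the bookkeeping in the tail identity $\Delta_j(w) = d \cdot \Delta_{j-i}(\mathrm{red}(v))$---verifying that the scalar $d$ cancels precisely against the rescaling introduced by $\mathrm{red}$---together with the boundary case $i = 1$, where $d = 1$ and the proper inclusion $\mathcal{E}_{1,n} \subsetneq \mathcal{D}_{1,n}$ must be singled out.
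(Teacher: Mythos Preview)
Your proof is correct and rests on the same ingredients as the paper's---the bijection $\phi$ of Proposition~\ref{prop:prefix<->trapezoid}, the map $\tau_{i,n}$ of Theorem~\ref{sncounting}, and induction on the reduced suffix $\mathrm{red}(v)$---but it is packaged differently. The paper first invokes Theorem~\ref{sncounting} to conclude that $|\mathcal{R}_n| = s_n$, and then proves only \emph{injectivity} of $w \mapsto p(w)$ by a two-case analysis ($u \neq u'$ via $\phi$; $u = u'$, $v \neq v'$ via induction), letting the cardinality match supply surjectivity. You instead establish the structural identity $\tau_{i,n}\bigl(p(u\cdot v)\bigr) = \bigl(\phi(u),\, p(\mathrm{red}(v))\bigr)$ and use it to exhibit $\Psi_n$ as a disjoint union of composite bijections $\mathcal{R}_n^{(i)} \to \mathcal{T}_{i,n}$, so both injectivity and surjectivity fall out at once without appealing to the numerical count. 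Your route is slightly more self-contained and makes explicit two points the paper leaves implicit: that the decomposition $w = u\cdot v$ is unique (hence the $\mathcal{R}_n^{(i)}$ are disjoint), and that $p(w)$ actually lands in $\mathcal{T}_{i,n}$ when $u \in \mathcal{E}_{i,n}$. The paper's version, in turn, is shorter precisely because the counting in Theorem~\ref{sncounting} has already done the surjectivity work.
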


\begin{proof}
In view of Theorem \ref{sncounting}, the cardinality of ${\mathcal R}_n$ is the correct one. To construct a permutation $u\cdot v \in \mathcal{R}_n$, we fix a prefix $u \in {\mathcal E}_{i,n}$ and a permutation $\tau \in \mathcal{R}_{n-i}$. Then there is a unique suffix $v$ such that ${\rm{alph}}(v) = [n] \setminus {\rm{alph}}(u)$ and ${\rm{red}}(v) = \tau$.

Consider two elements in ${\mathcal R}_n$, namely $w = u \cdot v$ and $w' = u' \cdot v'$. We will prove inductively on $n$ that if $w \neq w'$ then $w^{-1} \nsim_{ss} {w'}^{-1}$. To do that it suffices to show that the corresponding pyramidal sequences of consecutive differences $p(w)$ and $p(w')$ are not the same. We distinguish between two cases.

\begin{enumerate}[{Case }1:]
%{\bf Case 1.}
\item If $u \neq u'$, where $u, u' \in \mathcal{E}_{i,n}$, then we necessarily get $i\geq 2$ and hence $u$ and $u'$ are distinct elements in $\mathcal{D}_{i,n}$. In view of the bijection $\phi$ in Proposition \ref{prop:prefix<->trapezoid}, we obtain $\phi(u) \neq \phi(u')$ and therefore the corresponding pyramidal sequences are distinct.

%{\bf Case 2.}
\item If $u = u'$ then $v \neq v'$. On the other hand, ${\rm{alph}}(v) = {\rm{alph}}(v')$, which implies that ${\rm{red}}(v)$ and ${\rm{red}}(v')$ are distinct elements of $\mathcal{R}_{n-i}$. Our inductive argument on length immediately implies that $p({\rm{red}}(v)) \neq p({\rm{red}}(v'))$, hence the initial parts for $p(u\cdot v)$ and $p(u'\cdot v')$ are distinct since they are a scalar multiple of $p({\rm{red}}(v))$ and $ p({\rm{red}}(v'))$, respectively.
\end{enumerate}
In all cases, we get $p(u\cdot v) \neq p(u' \cdot v')$, as required.
\end{proof}

The recursively constructed sets $\mathcal{R}_{n}$, for $n \in [3, 6]$, are given in Table 5 (see Appendix), where all prefixes $u \in {\mathcal E}_{i,n}$ of permutations in $\mathcal{R}_{n}$ are highlighted in red colour.

\vspace{0.2in}

In our next result, we enumerate the set of super-stong Wilf equivalent classes of permutations of given cardinality. In view of Remark \ref{rem:2power}, such a cardinality is always a power of $2$. Let $s_{j,n}$ be the number of super-strong Wilf equivalence classes of order $2^j$ in $\mathcal{S}_n$, where $j \in [n-1]$. Note that $s_{0,n} = 0$. It is easy to check that $s_{1,2} = s_{1,3} = s_{2,3} = 1$.

\begin{theorem} \label{sjn}
For $n \geq 4$ we have
\[ s_{j,n} = \displaystyle  s_{j-1,n-1} + \sum_{k=2}^{n-j-1} d_{k,n} \cdot s_{j,n-k}. \]
\end{theorem}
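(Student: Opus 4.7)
The approach will refine the proof of Theorem~\ref{sncounting}, partitioning $\varPi_n$ as in \eqref{eq:partition} and, instead of merely counting cardinalities, keeping track of the exponent $j$ from Remark~\ref{rem:2power}. Recall that $j(p)$ equals the number of indices $k\in[n-1]$ for which $\Delta_k=(\underbrace{d,\ldots,d}_{n-k})$ and $\Delta_{k+1}=(\underbrace{d,\ldots,d}_{n-k-1})$ share the same common value $d$, under the convention $\Delta_n=\epsilon$ (so the transition at $k=n-1$ always contributes). For every $i\in[n-2]$, I will compute $|\{p\in\mathcal{T}_{i,n}:j(p)=j\}|$ via the bijection $\tau_{i,n}$ of \eqref{eq:bijection_t}, and then sum over $i$.

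Given $p\in\mathcal{T}_{i,n}$, I split the ``$d$-preserving'' transitions into those occurring inside the prefix $(\Delta_1,\ldots,\Delta_{i+1})$ and those occurring inside the suffix $(\Delta_{i+1},\ldots,\Delta_{n-1},\epsilon)$. By the defining conditions of $\mathcal{T}_{i,n}$, the vectors $\Delta_2,\ldots,\Delta_i$ are non-periodic, so for $i\geq 2$ the prefix contributes nothing. For $i=1$, the assumption $n\geq 4$ forces $\Delta_2$ to have length at least $2$, and any addition move applied to $\Delta_1=(\underbrace{1,\ldots,1}_{n-1})$ inserts a $2$ and destroys periodicity; hence the only way to obtain a periodic $\Delta_2$ is by deletion, yielding $\Delta_2=(\underbrace{1,\ldots,1}_{n-2})$. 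Thus the prefix contributes exactly one $d$-preserving transition, with $d=1$.

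For the suffix, the scaling by $1/d$ appearing in \eqref{eq:bijection_t} is a bijection that preserves both periodicity and equality of common values of consecutive periodic vectors. Hence the number of $d$-preserving transitions inside $(\Delta_{i+1},\ldots,\Delta_{n-1},\epsilon)$ equals $j(q)$, where $q\in\varPi_{n-i}$ denotes the second coordinate of $\tau_{i,n}(p)$. Combining the two observations gives
\[ j(p)=\begin{cases} 1+j(q), & i=1,\\ j(q), & i\geq 2,\end{cases} \]
from which, using $|\varDelta_{1,n}|=1$ and $|\varDelta_{i,n}|=d_{i,n}$ for $i\geq 2$,
\[ |\{p\in\mathcal{T}_{1,n}:j(p)=j\}|=s_{j-1,n-1},\qquad |\{p\in\mathcal{T}_{i,n}:j(p)=j\}|=d_{i,n}\cdot s_{j,n-i}\ \ (i\geq 2). \]
Summing these contributions and noting that $s_{j,n-i}=0$ as soon as $i>n-j-1$ (since $s_{j,m}=0$ unless $j\leq m-1$) yields the claimed recursion, with the sum truncated precisely at $k=n-j-1$.

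The main obstacle is the careful bookkeeping of $d$-preserving transitions and, in particular, the $i=1$ case: without the assumption $n\geq 4$ one could instead have $\Delta_2=(2)$, which is periodic but changes the common value $d$. This is exactly the configuration that breaks down for $n=3$ and explains why the recursion is stated only for $n\geq 4$.
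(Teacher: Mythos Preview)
Your proof is correct and follows essentially the same approach as the paper: both partition $\varPi_n$ via the sets $\mathcal{T}_{i,n}$, use the bijection $\tau_{i,n}$ of \eqref{eq:bijection_t}, and observe that for $i\geq 2$ the trapezoidal prefix contributes no $d$-preserving transitions while for $i=1$ it contributes exactly one. Your treatment is in fact slightly more explicit than the paper's in two places---you justify carefully why $\Delta_2=(\underbrace{1,\ldots,1}_{n-2})$ when $i=1$ (and hence why $n\geq 4$ is needed), and you spell out why the upper summation limit is $n-j-1$ rather than $n-2$---but the underlying argument is identical.
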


\begin{proof}
Let ${\mathcal F}_{j,n}$ be the set of all pyramidal sequences in $\varPi_n$ with corresponding super-strong Wilf equivalence class of order $2^j$.
From \eqref{eq:partition} it clearly follows that
\begin{equation} \label{refinedpartition}
 {\mathcal F}_{j,n} = ({\mathcal F}_{j,n} \cap {\mathcal T}_{1,n}) \sqcup ({\mathcal F}_{j,n} \cap {\mathcal T}_{2,n})
\sqcup \cdots \sqcup  ({\mathcal F}_{j,n} \cap {\mathcal T}_{i,n}) \sqcup \cdots \sqcup ({\mathcal F}_{j,n} \cap {\mathcal T}_{n-2,n}). \end{equation}

By Remark \ref{rem:2power}, the exponent $j$ is equal to the number of transitions from $\Delta_k = (\underbrace{d, d, \ldots, d}_{n-k})$ to $\Delta_{k+1} = (\underbrace{d, d, \ldots, d}_{n-k-1})$, for $k\in [n-1]$.
%Indeed, in view of \cite[Lemma 17]{HMS}, a permutation can be constructed using a given $p\in \varPi_n$. Throughout this step-by-step construction, we have either one or two choices for the placement of each new letter. The latter occurs only on transitions from $\Delta_k = (\underbrace{d, d, \ldots, d}_{n-k})$ to $\Delta_{k+1} = (\underbrace{d, d, \ldots, d}_{n-k-1})$.
Recall that for $k=n-1$, by convention $\Delta_{k+1}$ is the empty vector.

Let $i \in [2, n-1]$. Consider a pyramidal sequence $( \Delta_1, \ldots , \Delta_{n-1}) \in {\mathcal T}_{i,n}$. Observe that for $k \in [i]$ there are no transitions of the aforementioned form, therefore restricting the bijection ${\tau}_{i,n}$ in \eqref{eq:bijection_t} to $ {\mathcal F}_{j,n}$, the only contribution to the exponent $j$ comes from the part $( \Delta_{i+1}, \ldots , \Delta_{n-1})$. But the number of different classes there is equal to $s_{j, n-i}$. It follows that $|{\mathcal F}_{j,n} \cap {\mathcal T}_{i,n}| = d_{i,n}\cdot s_{j,n-i}$.

Now let $i=1$. Observe that there is a transition from ${\Delta}_1$ to $\Delta_2 = (\underbrace{1, 1, \ldots, 1}_{n-2})$ that raises the exponent of the order of the equivalence class by one, hence to get the desired exponent $j$ we need $j-1$ additional transitions on the upper part of the pyramid; there are precisely $s_{j-1, n-1}$ classes satisfying that and the result follows.
\end{proof}

\section{Numbers $d_{i,n}$ and non-interval permutations}
\label{sec:din}
The calculation of numbers $s_n$ and $s_{j,n}$ is based on the coefficients $d_{i,n}$. In this section we will present a recursive formula for the latter.

It turns out that numbers $d_{i,n}$ are related to the number $a_{n}$ of {\em non-interval} permutations. These are all permutations $s = s_1 s_2 \cdots s_n$ of size $n \geq 2$ such that the {\rm{alph}}abet of any prefix $s_1 \cdots s_l$ of length $2 \leq l < n$ is not equal to the interval $[k, l + k -1]$, for some $k$ (see \cite[Proposition 4.5]{aval}). The enumeration of non-interval permutations yields the integer sequence
\[ 2, \, 2, \, 8, \, 44, \, 296, \, 2312, \, 20384, \, 199376, \, 2138336, \, 24936416, \, 3141422848, \ldots \]
(also known as $|b_{n}|$, for $n \geq 2$, where $b_{n}$ is Sequence \underline{A077607} of \cite{oeis}) with recurrence formula
\begin{equation}
 \sum_{k=1}^{i} a_{k+1} \cdot {(i-k+1)}! = {(i+1)}!. \label{noninterval}
\end{equation}
Our initial calculations indicated the equation $d_{i, n} = a_{i+1}$, for relatively small $i$.
In fact, numbers $d_{i,n}$ satisfy the recursive formula of our following result, whose special case for $i < \lfloor \frac{n}{2} \rfloor$ coincides with \eqref{noninterval}.

Let $q_{l,m}$ and $r_{l,m}$ be the quotient and remainder, respectively, of an arbitrary integer $l$ with a non-zero integer $m$.

\begin{theorem} \label{numbersdin}
Let $n \geq 4$. For a given $i \in [n-2]$ set $m = n-i-1$. Then
\begin{equation} \label{eq: recdin}
 \sum_{k=1}^{i} \frac{q_{n-k,m}}{2} \cdot (r_{n-k,m} + i - k + 1) \cdot d_{k,n} \cdot {(i-k)}! =
                      \frac{q_{n,m}}{2} \cdot (r_{n,m} + i + 1) \cdot {i}!.
\end{equation}
\end{theorem}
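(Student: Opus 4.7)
The plan is to prove \eqref{eq: recdin} by double-counting the quantity
\[ P_{i,n} \;=\; \#\{\, u : u \text{ is a length-}i \text{ prefix of some } w \in \mathcal{S}_n \text{ and } [n]\setminus \mathrm{alph}(u) \text{ is periodic}\,\}. \]
I will show that the right-hand side of \eqref{eq: recdin} equals $P_{i,n}$ computed directly, while the left-hand side equals $P_{i,n}$ after stratifying each such $u$ by the length of its \emph{shortest} prefix whose complementary alphabet is already periodic.

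The key auxiliary step is to count the $L$-element arithmetic progressions contained in an $N$-element arithmetic progression. An inner AP of common difference $c$ (measured in the outer AP's step) has $N-(L-1)c$ starting positions whenever this number is positive, so with $m=L-1$ and $N=mq+r$, $0\le r<m$, a short split on whether $r=0$ or $r\ge 1$ shows that the total number of such inner APs equals $\tfrac{q}{2}(r+N-m)$. Applying this with $N=n$ and $L=n-i$ (so that $m=n-i-1$ as in the theorem) gives $\tfrac{q_{n,m}}{2}(r_{n,m}+i+1)$ periodic $(n-i)$-subsets of $[n]$; multiplying by $i!$ for the ordering of the complement recovers the right-hand side of \eqref{eq: recdin} as $P_{i,n}$.

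For the left-hand side, every $u$ counted by $P_{i,n}$ determines a unique shortest prefix $u'=u_1\cdots u_k$, $k\in[1,i]$, for which $[n]\setminus\mathrm{alph}(u')$ is periodic; by Definition \ref{def:prefixes} this $u'$ lies in $\mathcal{D}_{k,n}$, contributing $d_{k,n}$ choices. Once $u'$ is fixed, the set $A=[n]\setminus\mathrm{alph}(u')$ is an AP of length $n-k$, and an extension of $u'$ to a $u$ counted by $P_{i,n}$ amounts to choosing an $(n-i)$-element AP $B\subseteq A$ (which then plays the role of $[n]\setminus\mathrm{alph}(u)$) and linearly ordering the $i-k$ letters of $A\setminus B$. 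Since any AP inside $A$ automatically has common difference a multiple of the step of $A$, the same auxiliary lemma applied with $N=n-k$ gives $\tfrac{q_{n-k,m}}{2}(r_{n-k,m}+i-k+1)$ choices for $B$, hence $\tfrac{q_{n-k,m}}{2}(r_{n-k,m}+i-k+1)\cdot(i-k)!$ valid extensions of each $u'$. Summing over $k\in[1,i]$ reproduces the left-hand side of \eqref{eq: recdin}, and the identity follows.

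The main obstacle is the auxiliary lemma on nested APs: the closed form $\tfrac{q}{2}(r+N-m)$ requires a careful case split according to whether $m\mid N$, and one must also verify that the decomposition on the left is exhaustive and non-overlapping, which is exactly the minimality clause built into Definition \ref{def:prefixes}. After these ingredients are in place, matching coefficients of $d_{k,n}$ is immediate and the theorem follows at once.
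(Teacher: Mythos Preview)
Your proposal is correct and is essentially the paper's own argument: the paper defines $p_{i,n}$ exactly as your $P_{i,n}$, proves your auxiliary AP count as Lemma~\ref{numberspin}, and then obtains \eqref{eq: recdin} by the same stratification---each $u$ counted by $p_{i,n}$ has a unique shortest prefix $u'\in\mathcal{D}_{k,n}$, and the number of extensions is $p_{i-k,n-k}$. The only cosmetic difference is that the paper phrases the extension count as ``prefixes of length $i-k$ in $n-k$ letters with periodic suffix'' (i.e., it passes to $[n-k]$), whereas you count $(n-i)$-term APs inside the specific AP $A=[n]\setminus\mathrm{alph}(u')$ directly; the natural order-preserving bijection between $A$ and $[n-k]$ makes these identical.
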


For the proof of the theorem we will need the following result.

\begin{lemma} \label{numberspin}
Let $p_{i,n}$ be the number of all prefixes of length $i$ of permutations in $n$ letters with corresponding periodic suffix and set $m = n-i-1$. Then
\[ p_{i,n} = \frac{q_{n,m}}{2} \cdot (r_{n,m} + i + 1) \cdot {i}!. \]
\end{lemma}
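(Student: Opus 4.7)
The plan is to reduce the computation of $p_{i,n}$ to counting arithmetic progressions of a fixed length inside $[n]$. A prefix $u = u_1 \cdots u_i$ of length $i$ of some permutation in $\mathcal{S}_n$ is determined by its underlying set $A = {\rm{alph}}(u) \subset [n]$, together with a linear order on the elements of $A$. The corresponding suffix uses every letter of the complement $[n] \setminus A$, and by Definition~\ref{def:lower pyramids} the suffix alphabet is periodic if and only if $[n] \setminus A$ is an arithmetic progression of length $n - i = m+1$ inside $[n]$. Since each admissible $A$ can be ordered in exactly $i!$ ways to produce a prefix $u$, we obtain $p_{i,n} = i! \cdot N$, where $N$ denotes the number of arithmetic progressions of length $m+1$ (and common difference $d \geq 1$) contained in $[n]$.

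Next I would compute $N$ by stratifying according to the common difference $d$. A progression $\{a, a+d, \ldots, a+md\}$ is contained in $[n]$ precisely when $1 \leq a$ and $a + md \leq n$, which gives exactly $n - md$ admissible starting points $a$ for each $d$ with $md \leq n-1$. Writing $n = qm + r$ with $q = q_{n,m}$ and $0 \leq r = r_{n,m} < m$, the valid range for $d$ is $1 \leq d \leq q-1$ when $r=0$ and $1 \leq d \leq q$ when $r>0$. In either case the sum may be taken over the common range $d = 1, \ldots, q$, because the possibly extra term at $d = q$ equals $n - qm = r$, which is $0$ precisely in the case $r=0$. Consequently
\[
N \;=\; \sum_{d=1}^{q} (n - md) \;=\; q\,n - m \cdot \tfrac{q(q+1)}{2} \;=\; \tfrac{q}{2}\bigl(2n - m(q+1)\bigr) \;=\; \tfrac{q}{2}\bigl(n + r - m\bigr).
\]

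Finally, since $m = n - i - 1$ gives $n - m = i + 1$, we get $n + r - m = r_{n,m} + i + 1$, hence $N = \tfrac{q_{n,m}}{2}(r_{n,m} + i + 1)$, and multiplying by $i!$ yields the desired identity $p_{i,n} = \tfrac{q_{n,m}}{2}(r_{n,m} + i + 1) \cdot i!$. The only subtle point is the uniform handling of the divisibility cases $r=0$ and $r>0$ when writing the single sum over $d \in [1,q]$; apart from that, the proof is a direct Euclidean-division computation.
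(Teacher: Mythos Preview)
Your proof is correct and follows essentially the same route as the paper: both arguments factor out the $i!$ orderings of the prefix and then count the periodic complements as arithmetic progressions of length $m+1$ in $[n]$, stratifying by the common difference $d$ and summing $n-md$. Your treatment is in fact slightly more explicit about the divisibility edge case $r_{n,m}=0$ (where the term $d=q$ contributes zero), which the paper's proof handles silently by simply taking $d\in[q_{n,m}]$.
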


\begin{proof}
It suffices to count the number $x_{i,n}$ of all ordered periodic words of length $n-i$ in $n$ distinct letters and then multiply it with the $i!$ choices for the remaining prefix letters. For a fixed $d \in [q_{n,m}]$, each such ordered periodic word has the form $a (a + d) \cdots (a + jd) \cdots (a + md)$, for $j \in [0,m]$ and is uniquely determined by the choice for the letter $a$. There exist precisely $n - dm$ such choices, so that \[ x_{i,n} = \sum_{d=1}^{q_{n,m}} (n- dm) = n q_{n,m} - m \sum_{d=1}^{q_{n,m}} d = n q_{n,m} - m \frac{q_{n,m}}{2} (q_{n,m} + 1). \]
Now since $n = m q_{n,m} + r_{n,m}$, we obtain $x_{i,n} = \frac{q_{n,m}}{2} (r_{n,m} + i +1)$ and our result follows.
\end{proof}

\begin{proof}[of Theorem \ref{numbersdin}]
We count the number of prefixes $p_{i,n}$ with periodic suffix in two ways. By Lemma \ref{numberspin} we directly obtain the number on the right hand side of \eqref{eq: recdin}.

An alternative counting method for $p_{i,n}$ is to start from the prefixes themselves. Consider a prefix $u$ of length $i$ such that the
remaining $(n-i)$-letter suffix is an ordered periodic word. Then there exists a unique $k \in [i]$ such that the prefix $u'$ of $u$ lies in ${\mathcal D}_{k,n}$. For this particular $k$-letter prefix $u'$, the number of all prefixes of length $i-k$ in the remaining $n-k$ distinct letters is equal to $p_{i-k, n-k}$ and by Lemma \ref{numberspin} we obtain that
\[ p_{i-k, n-k} = \frac{q_{n-k,m}}{2} (r_{n-k,m} + i - k +1)(i-k)!, \]
since $m = (n-k)-(i-k)-1 = n-i-1$. Note that for $k=i$ we get $p_{0,n-i}=1$, as expected.

A simple counting argument now yields
\[ p_{i,n} = \sum_{k=1}^{i} p_{i-k, n-k} \cdot d_{k,n} \]
and the result follows.
\end{proof}

\begin{remark}
Let $i < \lfloor \frac{n}{2} \rfloor$. Recall that $m = n - i -1$. Then it follows that $m \geq \lceil \frac{n}{2} \rceil - 1$, so that
$q_{n,m} = q_{n-k,m} = 1$ and $r_{n,m} = n - m$, $r_{n-k,m} = (n - k) - m$, for $k \in [i]$. Substituting in \eqref{eq: recdin} we obtain
\[ \sum_{k=1}^{i} d_{k, n} \cdot {(i - k + 1)}! = {(i + 1)}!. \]
It follows that $d_{k, n} = a_{k+1}$, for all $k < \lfloor \frac{n}{2} \rfloor$, since $d_{1, n} = a_{2} = 2$.
\end{remark}

This equality of cardinalities is not a mere coincidence. There is actually a deeper connection between ${\mathcal D}_{i,n}$ and the corresponding non-interval permutations.

\begin{proposition} \label{nonintervalbijection}
There is a bijection between the set of prefixes ${\mathcal D}_{k, n}$, for $k < \lfloor \frac{n}{2} \rfloor$ and the set ${\mathcal A}_{k+1}$
 of all non-interval permutations of length $k+1$.
\end{proposition}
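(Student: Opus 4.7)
My plan is to construct an explicit bijection $\phi\colon{\mathcal D}_{k,n}\to{\mathcal A}_{k+1}$ by applying the order-preserving \emph{contraction} that collapses the $(n-k)$-element periodic block of $[n]$ onto a single point of $[k+1]$. The hypothesis $k<\lfloor n/2\rfloor$ plays a key role: for any $j\leq k-1$, a periodic subset of $[n]$ of size $n-j$ has common difference at most $(n-1)/(n-j-1)<2$, and therefore must be an interval. Applied to the conditions of Definition~\ref{def:prefixes}, this shows that $u\in{\mathcal D}_{k,n}$ is equivalent to: ${\rm{alph}}(u)=[n]\setminus[a,a+n-k-1]$ for a unique $a=a(u)\in[1,k+1]$, and $[n]\setminus{\rm{alph}}(u_{1}\cdots u_{j})$ is not an interval of $[n]$, for each $j\in[k-1]$.

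Given such $u$, I let $\pi\colon[n]\setminus[a,a+n-k-1]\to[k+1]\setminus\{a\}$ be the order-preserving bijection, namely $\pi(j)=j$ for $j<a$ and $\pi(j)=j-(n-k-1)$ for $j\geq a+n-k$. I then set $\phi(u):=s_{1}s_{2}\cdots s_{k+1}$, with $s_{1}=a$ and $s_{i}=\pi(u_{k+2-i})$ for $i\in[2,k+1]$; equivalently, $\phi(u)$ is $a$ prepended to the reversal of the contracted word $\pi(u_{1})\pi(u_{2})\cdots\pi(u_{k})$. This is manifestly a permutation of $[k+1]$, and the map $\psi$ sending $s\in\mathcal{S}_{k+1}$ back via $a:=s_{1}$ and $u_{i}:=\pi^{-1}(s_{k+2-i})$ is a two-sided inverse at the level of words. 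It remains to show that $\phi$ indeed takes ${\mathcal D}_{k,n}$ into ${\mathcal A}_{k+1}$ (and $\psi$ does the converse).

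The crux is a transfer lemma. For $j\in[k-1]$ put $T_{j}:={\rm{alph}}(s_{k-j+2}\cdots s_{k+1})\subset[k+1]\setminus\{a\}$; by construction ${\rm{alph}}(u_{1}\cdots u_{j})=\pi^{-1}(T_{j})$, and since $s$ is a permutation of $[k+1]$ the complement $[k+1]\setminus T_{j}={\rm{alph}}(s_{1}\cdots s_{k-j+1})$ automatically contains $a=s_{1}$. I will prove
\[ [n]\setminus\pi^{-1}(T_{j})\ \text{is an interval of}\ [n]\ \Longleftrightarrow\ {\rm{alph}}(s_{1}\cdots s_{k-j+1})\ \text{is an interval of}\ [k+1]. \]
The argument splits $T_{j}=T_{j}^{-}\sqcup T_{j}^{+}$ with $T_{j}^{-}:=T_{j}\cap[1,a-1]$ and $T_{j}^{+}:=T_{j}\cap[a+1,k+1]$, and exploits that $[n]\setminus\pi^{-1}(T_{j})$ always contains the block $[a,a+n-k-1]$. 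It is an interval of $[n]$ exactly when $[1,a-1]\setminus T_{j}^{-}$ is a suffix of $[1,a-1]$ and $[a+n-k,n]\setminus\pi^{-1}(T_{j}^{+})$ is a prefix of $[a+n-k,n]$; that is, when $T_{j}^{-}=[1,\beta-1]$ and $T_{j}^{+}=[\gamma+1,k+1]$ for some $\beta\leq a\leq\gamma$, which in turn is equivalent to $[k+1]\setminus T_{j}=[\beta,\gamma]$ being a single interval of $[k+1]$ that straddles $a$.

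Finally, substituting $l=k-j+1$ (so that $l$ ranges over $[2,k]$ as $j$ ranges over $[1,k-1]$), the transfer lemma converts the minimality criterion defining ${\mathcal D}_{k,n}$ into the statement that ${\rm{alph}}(s_{1}\cdots s_{l})$ is not an interval of $[k+1]$ for each $l\in[2,k]$, i.e., $s\in{\mathcal A}_{k+1}$; the extremal case $j=1$, $l=k$ matches $u_{1}\notin{\mathcal D}_{1,n}=\{1,n\}$ with $s_{k+1}\notin\{1,k+1\}$. The main obstacle will be the transfer lemma itself: superficially, ``complement of interval in $[n]$'' for ${\rm{alph}}(u_{1}\cdots u_{j})$ and ``interval in $[k+1]$'' for ${\rm{alph}}(s_{1}\cdots s_{l})$ are different combinatorial shapes (``prefix $\cup$ suffix'' versus a single interval), and the proof hinges on the observation that $a$ is always an interior point of that interval in $[k+1]$, so the contraction $\pi$ places exactly these two shapes in bijection.
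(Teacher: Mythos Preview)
Your proof is correct and takes essentially the same approach as the paper: both observe that $k<\lfloor n/2\rfloor$ forces the periodic complement $[n]\setminus\mathrm{alph}(u)$ to be an interval $[a,a+n-k-1]$, apply the order-preserving contraction onto $[k+1]$ (the paper writes this as $\mathrm{red}(ua)$), and then verify that the minimality condition on proper prefixes of $u$ matches the non-interval condition on the image. The only cosmetic difference is that the paper first lands in the set $\mathcal{B}_{k+1}$ of permutations with no interval \emph{suffix} and then invokes the reversal bijection $\mathcal{B}_{k+1}\cong\mathcal{A}_{k+1}$, whereas you fold the reversal into the definition of $\phi$ and land in $\mathcal{A}_{k+1}$ directly---your $\phi(u)$ is precisely $\widetilde{\rho(u)}$ in the paper's notation.
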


\begin{proof}
It is more convenient to find a bijection from ${\mathcal D}_{k, n}$ to the set ${\mathcal B}_{k+1}$ of all permutations
$b = b_1 b_2 \cdots b_k b_{k+1}$ of size $k+1 \geq 2$ such that any suffix $b_{k-l+2} \cdots b_{k+1}$ of length $2 \leq l < k+1$ is not, up to order, equal to an interval of the form $[j, l + j - 1]$. This is possible due to the fact that ${\mathcal B}_{k+1}$ is clearly equinumerous to ${\mathcal A}_{k+1}$, via the bijection $w \mapsto \tilde{w}$.

Let $u = u_1 u_2 \cdots u_k$ be a prefix in ${\mathcal D}_{k, n}$. Let $a = \min ( [n] \setminus {\rm{alph}}(u) )$. We define a map $\rho : {\mathcal D}_{k, n} \longrightarrow {\mathcal B}_{k+1}$ as $\rho (u) = {\rm{red}}(u a)$.

For the reverse direction we define a map \\ $\theta : {\mathcal B}_{k+1} \longrightarrow {\mathcal D}_{k, n}$ as
$\theta (b_1 \cdots b_k b_{k+1}) = v_1 \cdots v_k$, where, for $i \in [k]$,
\begin{equation} \label{eq:psi}
v_i = \begin{cases}
b_i, \qquad \qquad \qquad \quad \, \mbox{if} \quad b_i < b_{k+1} \\
b_i + (n-k-1), \quad \mbox{if} \quad b_i > b_{k+1}.
\end{cases} \end{equation}
We begin by showing that maps $\rho$ and $\theta$ are well-defined.

To show that $\rho(u) \in {\mathcal B}_{k+1}$, for $u \in {\mathcal D}_{k,n}$, observe that since $k < \lfloor n/2 \rfloor$, the vector of consecutive differences of
$[n] \setminus {\rm{alph}}(u)$ is necessarily equal to $(\underbrace{1,1,\ldots,1}_{n-k-1})$,
hence \[\pi = u_1 \cdots u_k a (a+1) \cdots (a+n-k-1) \in {\mathcal S}_n.\] This implies that if we set $\rho(u) = b_1 \cdots b_k b_{k+1}$,
we must have
\begin{equation} \label{eq:prefinterv}
 b_i = \begin{cases}
u_i, \qquad \qquad \qquad \quad \, \mbox{if} \quad u_i < a \\
u_i - (n-k-1), \quad \mbox{if} \quad u_i > a,
\end{cases}
\end{equation}
for $i \in [k]$ and $b_{k+1} = a$.

Suppose, for the sake of contradiction, that $\rho(u) \notin {\mathcal B}_{k+1}$. Then consider the smallest index $j \in [k]$ such that
\[ \{b_j, \ldots , b_{k+1} \} = \{ a - l, \ldots , a - 1, a, a + 1, \ldots , a + r \}, \]
for some $l, r \in [k-j]$. Note that $l + r = k - j$. In view of \eqref{eq:prefinterv} we have that
\begin{equation} \label{eq:lr}
 \{u_j , \ldots , u_{k+1} \} = \{ a - l, \ldots , a - 1, a , a + (n-k), \ldots , a + r + (n-k-1) \}.
\end{equation}
Combining the form of the permutation $\pi$ and \eqref{eq:lr}, we obtain that $u_1 \cdots u_{j-1} $ lies in $ {\mathcal D}_{j-1, n}$, which contradicts the definition of ${\mathcal D}_{k, n}$.

Let $b=b_1 \cdots b_k b_{k+1} \in \mathcal{B}_{k+1}$. We need to show that $v =v_1 \cdots v_k = \theta(b) \in \mathcal{D}_{k,n}$. By definition of $\theta$, it follows that
\[ v_1 \cdots v_k \ b_{k+1} \ (b_{k+1} + 1) \ \cdots  (b_{k+1} + n-k-1) \in \mathcal{S}_n,\]
and consequently a prefix of $v$ (not necessarily proper) lies in $\mathcal{D}_{j,n}$ for some $j\leq k$. If $j<k$, then $b$ would have a suffix of length $k+1-j$ corresponding to an interval, a contradiction.

It remains to show that $\rho\circ\theta = id_{\mathcal{B}_{k+1}}$ and $\theta\circ\rho = id_{\mathcal{D}_{k,n}}$. We will demonstrate the former, the latter follows using similar arguments. Having that $b = b_1 \cdots b_k b_{k+1}\in \mathcal{B}_{k+1}$ and in view of \eqref{eq:psi}, we obtain ${\rm{alph}}(v_1 \cdots v_k) = [b_{k+1} - 1] \cup [b_{k+1} + n-k, n]$. Hence, $\min\big([n] \setminus {\rm{alph}}(v) \big) = b_{k+1}$ and $\rho(v) = {\rm{red}}(v  b_{k+1}) = b$, as required.
\end{proof}

\begin{example}
Let us calculate the number $s_{10}$ of super-strong Wilf equivalence classes of ${\mathcal S}_{10}$. In view of Theorem \ref{sncounting}, we need to calculate the numbers $d_{i, 10}$, for $i \in [8]$. Since $\lfloor 10/2 \rfloor = 5$, Proposition \ref{nonintervalbijection} immediately yields
$d_{1, 10} = a_{2} = 2$, $d_{2, 10} = a_{3} = 2$, $d_{3, 10} = a_{4} = 8$ and $d_{4, 10} = a_{5} = 44$.

To evaluate $d_{5, 10}$ set $i = 5$ and $m = 4$ in \eqref{eq: recdin} and since
$q_{10,4} = q_{9,4} = q_{8,4} = 2$; $q_{7,4} = q_{6,4} = 1$; $r_{10,4} = r_{6,4} = 2$,
$r_{9,4} = 1$, $r_{8,4} = 0$ and $r_{7,4} = 3$, we obtain
\[ \frac{2}{2} \cdot (1 + 5)\cdot 2  \cdot 4! + \frac{2}{2} \cdot (0 + 4) \cdot 2 \cdot 3! + \frac{1}{2} \cdot (3 + 3) \cdot 8 \cdot 2!
   + \frac{1}{2} \cdot (2 + 2) \cdot 44 \cdot 1!  + d_{5, 10} \! = \! \frac{2}{2} \cdot (2 + 6) \cdot 5!. \]
It follows that $d_{5,10} = 488$. Using \eqref{eq: recdin} in a similar fashion, we get $d_{6,10} = 4,664$, $d_{7,10} = 58,336$ and $d_{8,10} = 1,114,944$.

Substituting the above values of $d_{i,10}, \, i \in [2, 8]$ and the recursively calculated values $s_2 = 1$, $s_3 = 2$, $s_4 = 8$, $s_5 = 40$, $s_6 = 256$, $s_7 = 1,860$, $s_8 = 15,580$ and $s_9 = 144,812$, for $i < 10$ to formula \eqref{eq: sncount}, we finally obtain $s_{10} = 1,490,564$.
\end{example}

Using the same reasoning we have calculated all numbers $s_n$, for $n \in [12]$ (see Table 2 in the Appendix). The calculation is based on the numbers $d_{i,n}$, for $i \in [10]$ and $n \in [3, 12]$ (see Table 1 in the Appendix). In particular, in view of Proposition \ref{nonintervalbijection}, numbers $d_{k, n}$, for all $k < \lfloor \frac{n}{2} \rfloor$, i.e., numbers of all non-interval permutations of length $k+1$, are highlighted with {\rm{red}} colour in that table. Finally, numbers $s_{j, n}$, for $i \in [11]$ and $n \in [2, 12]$, are shown in Table 4 in Appendix.

\section{Shift and super-strong Wilf equivalence classes of permutations}
\label{sec:shift}
{\em Shift equivalence} is a geometric notion which was firstly introduced for words in general in \cite{FGMPX} providing a new insight in Wilf equivalence theory. It is defined using the notions of the {\em skyline diagram} and {\em rigid shift} of a word. The skyline diagram of a word $u = u_1 u_2 \cdots u_n \in \mathbb{P}^{*}$ of length $n$ is the geometric figure formed by adjoining $n$ columns of squares such that the $i$-th column is made up of $u_i$ squares. A rigid shift of a word $u$ is any word $v$ that can be formed by cutting the skyline diagram of $u$ at some height $h$ and rigidly moving together all blocks above the cut line in such a way that each moved column comes to rest on a column of height $h$.

\begin{definition}\cite[p. 4]{FGMPX}
Two words are \emph{shift equivalent} if one can be obtained from the other by performing any finite sequence of rigid shifts and reversals.
\end{definition}

Restricting ourselves to permutations we define the following notion.

\begin{definition}
Two permutations are \emph{strongly shift equivalent} if one can be obtained from the other by performing any finite sequence of rigid shifts only.
\end{definition}

\begin{example}
Consider the permutation $u = 32415 \in {\mathcal S}_{5}$. Then $v = 42513$ is a rigid shift of $u$ because $v$ is constructed by cutting the skyline diagram of $u$ at height $3$ and rigidly moving all blocks above it by two positions to the left. The permutation $w = 31524$ can not be obtained from $u$ by a sequence of rigid shifts. Nevertheless, $w$ is shift equivalent to $u$ since $\tilde{w} = v$.
\end{example}

We have the following result that relates super-strong Wilf equivalence and strong shift equivalence for permutations and provides us with an alternate way to characterize the former.

\begin{proposition} \label{ssrigid}
Let $u, v\in \mathcal{S}_n$. Then $u$ is strongly shift equivalent to $v$ if and only if $u\sim_{ss} v$.
\end{proposition}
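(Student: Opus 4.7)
The plan is to route both directions through Theorem \ref{seqofdiff}, which identifies $u \sim_{ss} v$ with the equality $p(u^{-1}) = p(v^{-1})$. For brevity, write $P_i(u) = \{ j \in [n] : u_j \geq i \}$ for the set of positions in $u$ occupied by letters $\geq i$, so that by Remark \ref{rem: Delta meaning} we have $\Delta_i(u^{-1}) = \Delta(P_i(u))$.

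For the forward direction, by induction it suffices to show that a single rigid shift preserves $p(u^{-1})$. Unpacking the definition, a rigid shift of $u$ at height $h$ with horizontal displacement $\delta$ produces the permutation $v$ determined by $v_j = u_j$ for $j \notin P_h(u)$, by $v_j = u_{j-\delta}$ for $j \in P_{h+1}(u) + \delta$, and by $v_j = h$ at the unique position of $P_h(u) \setminus (P_{h+1}(u) + \delta)$. From these formulas one reads off that $P_i(v) = P_i(u)$ for $i \leq h$ and $P_i(v) = P_i(u) + \delta$ for $i > h$; either way $\Delta(P_i(v)) = \Delta(P_i(u))$, hence $p(u^{-1}) = p(v^{-1})$ and $u \sim_{ss} v$ by Theorem \ref{seqofdiff}.

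For the converse, assume $u \sim_{ss} v$. By Lemma \ref{lem: construction} and Remark \ref{rem:2power}, the super-strong Wilf class of $u$ has exactly $2^j$ elements, in canonical bijection with $\{L, R\}^j$, whose coordinates index the $j$ periodic-to-periodic transitions $\Delta_k \to \Delta_{k+1}$ of the common pyramidal sequence, each with common difference $d_k$. It suffices to show that toggling a single coordinate in this encoding is realised by a single rigid shift, for then any two class-mates are joined by a finite sequence of such shifts obtained by flipping the bits of the encoding one at a time. A direct tracing of Lemma \ref{lem: construction} shows that toggling the choice at level $k$ produces a permutation $u'$ with $P_i(u') = P_i(u)$ for $i \leq k$ and $P_i(u') = P_i(u) + \delta$ for $i > k$, where $\delta = \pm d_k$ is determined by the direction of the flip. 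Matching this with the forward-direction formulas, $u'$ is precisely the result of a rigid shift of $u$ at height $h = k$ with displacement $\delta$; the shift is admissible because $P_k(u)$ is an arithmetic progression of step $d_k$ with $P_{k+1}(u) + \delta \subseteq P_k(u)$.

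The main obstacle is the direct tracing used in the converse: one must unwind the top-down construction of Lemma \ref{lem: construction}, handling the initial transition $\Delta_{n-1} \to \Delta_n$ (where letters $n-1$ and $n$ are placed jointly and the toggle just swaps their positions) separately from the lower transitions (where toggling at level $k$ translates all of $P_k, P_{k-1}, \ldots, P_1$ uniformly by $\pm d_k$ before the global renormalisation $P_1 = [n]$ absorbs this translation for the indices $i \leq k$ and relocates it onto the indices $i > k$). Both cases yield the same clean formula recorded above, which is exactly what is needed to match the rigid-shift description from the forward direction.
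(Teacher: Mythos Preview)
Your proof is correct, and for the converse direction it is essentially the paper's argument repackaged: both identify the $2^j$ class members with the choices made at the $j$ periodic-to-periodic transitions in Lemma~\ref{lem: construction}, and then show that altering a single such choice is realised by a single rigid shift at the corresponding height. The paper phrases this as an induction from $i=n-1$ downwards, you phrase it as flipping one coordinate in $\{L,R\}^j$; the underlying verification (that $P_i(u')=P_i(u)$ for $i\le k$ and $P_i(u')=P_i(u)+\delta$ for $i>k$) is the same.

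Where you genuinely diverge is the forward direction. The paper does \emph{not} go through Theorem~\ref{seqofdiff} here: it instead imports the bijection on $m$-minimal clusters from the proof of \cite[Theorem~1]{FGMPX}, observes that $m(v,E)$ is a rigid shift (hence a rearrangement) of $m(u,E)$, and then invokes the Minimal Cluster Rearrangement Theorem \cite[Theorem~4]{HMS}. Your route is more elementary and fully self-contained within the present paper: you read off directly from the definition of a rigid shift that $P_i(v)$ equals $P_i(u)$ or a translate of it, so $\Delta_i(v^{-1})=\Delta_i(u^{-1})$ for every $i$, and conclude by Theorem~\ref{seqofdiff}. This avoids any appeal to cluster machinery or to external references, at the cost of unpacking the rigid-shift formulas explicitly. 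Both arguments are short; yours has the advantage of keeping everything inside the pyramidal-sequence language that the paper has already developed.
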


\begin{proof}
Suppose that $u$ is strongly shift equivalent to $v$. Since $u$ is obtained from $v$ by a finite sequence of rigid shifts, it suffices to show that if $v$ is obtained from $u$ by performing a single rigid shift, then $u\sim_{ss} v$. The proof of \cite[Theorem 1]{FGMPX} is based on the construction of a weight-preserving bijection from the $m$-minimal clusters of $u$ to the $m$-minimal clusters of $v$. It is shown there that, for a fixed embedding set $E$, $m(v,E)$ is itself a rigid shift of $m(u,E)$, hence it is also a rearrangement of $m(u,E)$, for all embedding index sets $E$. Using the {\em{Minimal Cluster Rearrangement Theorem}}, a necessary and sufficient criterion for super-strong Wilf equivalence (see \cite[Theorem 4]{HMS}), we conclude that
$u \sim_{ss} v$.

For the converse, suppose that $u\sim_{ss} v$. This is equivalent to $\Delta_i(s) = \Delta_i(t) = \Delta_i$ for all $i\in [n-1]$, where $s=u^{-1}$ and $t=v^{-1}$. We will construct a sequence of rigid shifts that transforms $u$ to $v$, starting from greater letters to smaller ones. This is done inductively for $i$ from $n-1$ down to $1$.

Let $i\in [n-1]$. Its relative position with respect to the greatest letters in $u$ and $v$ depends only on the transition from $\Delta_{i+1}$ to $\Delta_i$. The only case where we have more than one choice for this placement is when $\Delta_{i+1} = (\underbrace{d, d, \ldots, d}_{n-i-1})$ and $\Delta_{i} = (\underbrace{d, d, \ldots, d}_{n-i})$. In this case, the factors of the words $u$ and $v$ that correspond to the previous distance vector $\Delta_{i+1}$ are strongly shift equivalent by induction, and may be written in the form
\[
* \ \underbrace{\circ \cdots \circ}_{d-1} \ * \ \underbrace{\circ \cdots \circ}_{d-1} \ * \ \cdots \ * \ \underbrace{\circ \cdots \circ}_{d-1} \ * \ \underbrace{\circ \cdots \circ}_{d-1} *,
\]
where $*$ corresponds to letters greater than $i$ and $\circ$ corresponds to letters less than or equal to $i$. There are two choices for the placement of the letter $i$ on $u$ and $v$. We may assume that the character $i$ is placed on the left of the above configuration for $u$ as follows
\[
\fbox{i} \ \underbrace{\circ \cdots \circ}_{d-1} \ * \ \underbrace{\circ \cdots \circ}_{d-1} \ * \ \cdots \ * \ \underbrace{\circ \cdots \circ}_{d-1} \ * \ \underbrace{\circ \cdots \circ}_{d-1} *.
\]
If $i$ is also placed on the left of the above configuration for $v$, no rigid shift is needed. On the other hand, if it is placed on the right, then by applying \emph{a rigid shift of height $i$ by $d$ places} to the left, we get the same configuration, or a strongly shift equivalent configuration inherited by the previous steps as in $u$.

In the case where $i$ was placed originally to the right of the above configuration for $u$, the rigid shift, if needed, would be of the same type to the right.
\end{proof}

We are now ready to describe the shift equivalence class ${[u]}_{sh}$ of a permutation $u$ in terms of the corresponding super-strong Wilf classes ${[u]}_{ss}$ and ${[ {\tilde{u}}]}_{ss}$ of $u$ and its reversal $\tilde{u}$, respectively.

\begin{theorem} \label{shiftclass}
Let $v, id \in {\mathcal S}_{n}$, where $v = 1 \ 2 \ \cdots \ (n-3) \ (n-1) \ (n-2) \ n$ and $id$ is the identity permutation.
Then for an arbitrary element $u \in {\mathcal S}_{n}$ we have
\[
{[u]}_{sh} = \begin{cases}
{[u]}_{ss} , \qquad \qquad \text{if } \ u \, {\sim}_{ss} \, id, \: \mbox{or} \: \, u \, {\sim}_{ss} \, v, \\
{[u]}_{ss} \, \sqcup \, {[ {\tilde{u}}]}_{ss}, \: \, \, \mbox{otherwise}.
\end{cases}
\]
\end{theorem}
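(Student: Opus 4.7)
The plan has three stages. The first step is to establish that ${[u]}_{sh} = {[u]}_{ss} \cup {[\tilde{u}]}_{ss}$ for every $u \in \mathcal{S}_n$. By Proposition \ref{ssrigid}, rigid shifts alone generate ${[u]}_{ss}$, so it suffices to track what reversals contribute. A direct calculation using the identity $\tilde{u}^{-1}(j) = n+1-u^{-1}(j)$ gives $\Sigma_i(\tilde{u}^{-1}) = \{n+1-x : x \in \Sigma_i(u^{-1})\}$, from which $\Delta_i(\tilde{u}^{-1})$ is the reverse of $\Delta_i(u^{-1})$. Consequently, if $u \sim_{ss} u'$ then $\tilde{u} \sim_{ss} \tilde{u'}$, since the pyramidal sequences of their inverses are related by reversing each vector. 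Closing under reversals therefore produces exactly ${[u]}_{ss} \cup {[\tilde{u}]}_{ss}$, and since distinct super-strong Wilf classes are disjoint, this union is either disjoint (when $u \not\sim_{ss} \tilde{u}$) or equal to ${[u]}_{ss}$ (when $u \sim_{ss} \tilde{u}$).

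By Theorem \ref{seqofdiff}, the condition $u \sim_{ss} \tilde{u}$ is equivalent to $p(u^{-1}) = p(\tilde{u}^{-1})$, i.e., every $\Delta_i(u^{-1})$ must be a palindrome. The theorem therefore reduces to identifying all palindromic pyramidal sequences in $\varPi_n$ and matching them to the two stated classes.

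The next step is to characterize palindromic pyramidal sequences by analyzing the three transitions of Definition \ref{def:pyramidal}. From a constant vector $(d,d,\ldots,d)$ of length $L$, any deletion preserves constancy, while merging two consecutive entries yields a palindrome only when performed at the central position (so $L$ must be even), producing the non-constant palindrome $(d,\ldots,d,2d,d,\ldots,d)$ of length $L-1$. From such a non-constant palindrome of length at least $3$, a careful case check on the three transitions shows that none yields a palindrome: both deletions shift the centered entry $2d$ off-center, merging $2d$ with a neighboring $d$ produces an off-center $3d$, and merging two $d$s creates a second large entry that cannot be symmetric with the original. Hence such a non-constant palindromic vector admits no palindromic continuation.

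Since $\Delta_1 = (1,\ldots,1)$ is constant, palindromicity forces us to use only deletions, keeping the all-$1$s form, until the very last transition; there $\Delta_{n-2} = (1,1)$ either yields $\Delta_{n-1} = (1)$ by deletion or $\Delta_{n-1} = (2)$ by central merge. These two sequences are precisely $p(id^{-1})$ and $p(v^{-1})$, with the latter relying on the observation that $v$ is its own inverse. By Theorem \ref{seqofdiff}, they correspond to the classes ${[id]}_{ss}$ and ${[v]}_{ss}$, completing the characterization. The main obstacle is the structural step showing that a non-constant palindromic vector of length at least $3$ admits no palindromic continuation; this forces the lone permissible merge to occur only at the terminal transition.
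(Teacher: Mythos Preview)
Your argument is correct and, in fact, more self-contained than the paper's. Both proofs rest on the same two ingredients: Proposition \ref{ssrigid}, which identifies strong shift equivalence with super-strong Wilf equivalence, and the fact that the only super-strong Wilf classes invariant under reversal are ${[id]}_{ss}$ and ${[v]}_{ss}$. The paper dispatches the second ingredient by citing \cite[Theorem 22]{HMS} directly, merely remarking that the two exceptional classes correspond to the all-$1$'s pyramid and to the pyramid with $\Delta_{n-1}=(2)$. You instead re-derive this classification from scratch: you compute $\Delta_i(\tilde{u}^{-1})$ as the reverse of $\Delta_i(u^{-1})$, reduce via Theorem \ref{seqofdiff} to finding all pyramidal sequences whose every level is a palindrome, and then carry out a structural analysis of the three transitions in Definition \ref{def:pyramidal} to show that once a non-constant palindromic level of length $\geq 3$ appears, no palindromic continuation exists. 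This forces the only possible merge to occur at $\Delta_{n-2}\to\Delta_{n-1}$, yielding exactly the two sequences $p(id)$ and $p(v)$.

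What this buys you is independence from the earlier paper: your proof lives entirely inside the present framework of pyramidal sequences and needs no external input beyond Theorem \ref{seqofdiff} and Proposition \ref{ssrigid}. The cost is the careful (but short) case analysis on transitions out of $(d,\ldots,d,2d,d,\ldots,d)$, which the paper avoids by quotation. Your first-stage computation that reversal sends one $\sim_{ss}$-class to another (because reversing each $\Delta_i$ is a well-defined involution on $\varPi_n$) is also a clean way to see that ${[u]}_{sh}={[u]}_{ss}\cup{[\tilde u]}_{ss}$, something the paper leaves implicit in ``follows directly from Proposition \ref{ssrigid} and the definition of shift equivalence.''
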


\begin{proof}
By \cite[Theorem 22]{HMS} it is known that the only super-strong Wilf equivalence classes that remain invariant under the application of the reversal map $u \mapsto \tilde{u}$ in ${\mathcal S}_{n}$ are the classes ${\mathcal I}_{n} = {[id]}_{ss}$ and ${\mathcal M}_{n} = {[v]}_{ss}$. Note that in terms of pyramidal sequences, the classes ${\mathcal I}_{n}$ and ${\mathcal M}_{n}$ correspond to the pyramid that contains only $1$'s and the pyramid with $1$'s everywhere except from the uppermost part which is ${\Delta}_{n-1} = (2)$, respectively. Our result now follows directly from Proposition \ref{ssrigid} and the definition of shift equivalence.
\end{proof}

The enumeration of shift equivalence classes of permutations of size $n$ is given by the next result which follows immediately by the previous theorem.

\begin{corollary} \label{shiftenum}
Let $sh_n$ be the number of distinct shift equivalence classes of the symmetric group ${\mathcal S}_{n}$ for $n \geq 3$. Then
\[ sh_n = 1 + \frac{s_n}{2}. \]
\end{corollary}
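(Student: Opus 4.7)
The plan is to invoke Theorem \ref{shiftclass} directly and carry out a simple counting argument that partitions the $s_n$ super-strong Wilf classes according to how they merge into shift classes.

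First I would recall the setup from Theorem \ref{shiftclass}: the reversal map $u \mapsto \tilde{u}$ induces an involution on the set of super-strong Wilf classes of ${\mathcal S}_n$, and by \cite[Theorem 22]{HMS} this involution has exactly two fixed points, namely $\mathcal{I}_n = [id]_{ss}$ and $\mathcal{M}_n = [v]_{ss}$. Moreover, Theorem \ref{shiftclass} tells us precisely how shift classes decompose into super-strong Wilf classes: the two fixed classes $\mathcal{I}_n$ and $\mathcal{M}_n$ each form a shift class on their own, whereas every other super-strong Wilf class $[u]_{ss}$ is paired with the distinct class $[\tilde{u}]_{ss}$, and together they form a single shift class $[u]_{sh} = [u]_{ss} \sqcup [\tilde{u}]_{ss}$.

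Next I would just count. Of the $s_n$ super-strong Wilf classes in ${\mathcal S}_n$, exactly $2$ of them (the fixed points $\mathcal{I}_n$ and $\mathcal{M}_n$) each contribute one shift class by themselves. The remaining $s_n - 2$ super-strong Wilf classes are permuted by reversal in $2$-cycles (since they are not fixed), so they group into $\frac{s_n - 2}{2}$ shift classes. Adding these contributions gives
\[ sh_n = 2 + \frac{s_n - 2}{2} = 1 + \frac{s_n}{2}, \]
which is the desired formula.

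There is essentially no obstacle here, since all the hard work has been done in Theorem \ref{shiftclass}; the only thing to check is that the count $s_n - 2$ is even, but this is automatic because reversal acts as an involution without further fixed points on the remaining classes, so they necessarily fall into pairs. It is worth noting in passing that the formula also implicitly recovers the fact that $s_n$ is even for $n \geq 3$, which is consistent with Remark \ref{rem:2power}.
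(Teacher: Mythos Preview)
Your proof is correct and is exactly the argument the paper has in mind; the paper simply states that the corollary ``follows immediately by the previous theorem,'' and you have spelled out that immediate counting. One minor quibble: your closing appeal to Remark~\ref{rem:2power} is misplaced, since that remark concerns the cardinality of each individual super-strong Wilf class being a power of~$2$, not the parity of~$s_n$; the evenness of~$s_n$ for $n\geq 3$ is a consequence of your own involution argument, not of that remark.
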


Numbers $sh_n$, for $n \in [12]$ are shown in Table 3 (see Appendix).

\section{Conclusions and further research}
The main result of this paper is a recursive formula for the number $s_n$ of super-strong Wilf equivalence classes of permutations in $n$ letters. This has been done by introducing a new family of numbers $d_{i,n}$ which enumerate either trapezoidal sequences of height $i$ in pyramidal sequences of consecutive differences for permutations of size $n$, or prefixes of length $i$ of some sort of generalized non-interval permutations of size $n$.

One problem that has not been dealt here is the evaluation of a generating function for the sequence $s_n$. In view of Theorem \ref{sncounting}, this would require a bivariate generating function for the double indexed sequence $d_{i,n}$, a problem that might be of interest on its own as these numbers could  appear elsewhere. To this purpose, one needs to treat formulae \eqref{eq: recdin} and \eqref{eq: sncount} with some exceptional care, as variable $n$ in \eqref{eq: sncount} is involved in both numbers $d_{i,n}$ and $s_n$. This project seems to be quite ambitious.

\acknowledgements
\label{sec:ack}
We would like to thank the anonymous referees for their valuable comments that improved this article. The first author would also like to dedicate this work to the memory of Athanassios Basogiannis, a truly inspirational mentor and teacher of mathematics from his high school years, who recently passed away.

\nocite{*}
\bibliographystyle{abbrvnat}
% use the following instead if you encounter problems
%\bibliographystyle{alpha}
\bibliography{ssWilf-dmtcs}
\label{sec:biblio}

\newpage
\vspace{-2cm}
\begin{appendices}
\section*{Appendix}

\begin{tabular}{| r||c|c|c|c|c|c|c|c|c|c| } \hline
$i \backslash n$ & 3 & 4 & 5 & 6 & 7 & 8 & 9 & 10 & 11 & 12 \\
\hline \hline
1    & 3 & {\color{red}{2}}  & {\color{red}{2}}  & {\color{red}{2}}   & {\color{red}{2}}  & {\color{red}{2}}  & {\color{red}{2}}  & {\color{red}{2}}  & {\color{red}{2}}  & {\color{red}{2}}  \\
\hline
2     &  & 6 & 4  &  {\color{red}{2}} & {\color{red}{2}} & {\color{red}{2}} & {\color{red}{2}}  & {\color{red}{2}}  & {\color{red}{2}} & {\color{red}{2}}  \\
\hline
3     &  &  & 24 & 16 & 14  & {\color{red}{8}}  & {\color{red}{8}}  & {\color{red}{8}}  & {\color{red}{8}}  & {\color{red}{8}}  \\
\hline
4     &  &  &  & 168 & 100  & 80  & 68  & {\color{red}{44}}  & {\color{red}{44}}  & {\color{red}{44}} \\
\hline
5     &  & &  &  & 1,212  & 712  & 500  & 488  & 416  & {\color{red}{296}} \\
\hline
6     &  &  &  &  &    & 10,824  & 6,376  & 4,664  &  3,704 & 3,512 \\
\hline
7     &  &  &  &  &   &   & 103,992  & 58,336  & 43,592  & 33,152 \\
\hline
8     &  &  &  &  &   &   &   & 1,114,944  & 630,544  & 444,992 \\
\hline
9     &  &  &  &  &   &   &   &   & 12,907,824  & 7,167,802 \\
\hline
10     &  &  &  &  &   &   &   &   &   & 162,773,970 \\
\hline
\end{tabular}
\begin{center}
Table 1: The numbers $d_{i,n}$, for $1 \leq i \leq 10$ and $3 \leq n \leq 12$
\end{center}

\vspace{0.5cm}

\noindent \begin{tabular}
{| r||c|c|c|c|c|c|c|c|c|c|c|c| } \hline
$n$ & 1 & 2 & 3 & 4 & 5 & 6 & 7 & 8 & 9 & 10 & 11 & 12 \\
\hline \hline
$s_n$     & 1 & 1  & 2  & 8  & 40  & 256  & 1,860  & 15,580  & 144,812  & 1,490,564 & 16,758,972 & 205,029,338 \\
\hline
\end{tabular}
\begin{center}
Table 2: The numbers $s_n$, for $1 \leq n \leq 12$
\end{center}

\vspace{0.5cm}

\noindent \begin{tabular}
{| r||c|c|c|c|c|c|c|c|c|c|c|c| } \hline
$n$ & 1 & 2 & 3 & 4 & 5 & 6 & 7 & 8 & 9 & 10 & 11 & 12 \\
\hline \hline
$sh_n$     & 1 & 1  & 2  & 5  & 21  & 129  & 931  & 7,791  & 72,407  & 745,283 & 8,379,487 & 102,514,670 \\
\hline
\end{tabular}
\begin{center}
Table 3: The numbers $sh_n$, for $1 \leq n \leq 12$
\end{center}

\vspace{0.5cm}

\noindent \begin{tabular}{| r||c|c|c|c|c|c|c|c|c|c|c| } \hline
$j \backslash n$ & 2 & 3 & 4 & 5 & 6 & 7 & 8 & 9 & 10 & 11 & 12 \\
\hline \hline
1     & 1 & 1 & 6  & 28  & 196  & 1,452  & 12,632  & 119,744  & 1,260,432  & 14,389,600 & 178,692,748 \\
\hline
2     &  & 1 & 1 & 10  & 46  & 330  & 2,416  & 21,216  & 197,120  & 2,067,024 & 23,263,418 \\
\hline
3     &   &  & 1 & 1 & 12  & 62  & 442  & 3,204  & 28,276  & 262,080 & 2,707,296 \\
\hline
4     &  & &  & 1 & 1  &  14 & 72  &  546 & 3,992  & 34,680 & 318,408 \\
\hline
5     &   &  &  &  &  1 &  1 & 16  & 82  & 630  & 4,744 & 41,108 \\
\hline
6     &  &  &  &  &    & 1  &  1 & 18  & 92  & 718 & 5,412 \\
\hline
7     &  &  &  &  &   &  & 1  &  1 &  20 & 102 & 810 \\
\hline
8     &  &  &  &  &  &   &   &  1 &  1 & 22 & 112 \\
\hline
9     &  &  &  &  &   &   &   &  &  1 & 1 & 24 \\
\hline
10     &  &  &  &  &   &   &   &   &  & 1 & 1 \\
\hline
11     &  &  &  &  &   &   &  &   &   &  & 1 \\
\hline
\end{tabular}
\begin{center}
Table 4: The numbers $s_{j,n}$, for $1 \leq j \leq 11$ and $2 \leq n \leq 12$
\end{center}

\vspace{-5cm}

\noindent \begin{tabular}{|c||l|} \hline
$n$ & $\mathcal{R}_{n}$ \\
\hline \hline
3 & {\color{red}{1}}23, {\color{red}{2}}13 \\ \hline
4 & {\color{red}{1}}234, {\color{red}{1}}324 \\ \cline{2-2}
 & {\color{red}{21}}34, {\color{red}{23}}14, {\color{red}{24}}13, {\color{red}{31}}24, {\color{red}{32}}14, {\color{red}{34}}12 \\ \hline
5 & {\color{red}{1}}2345, {\color{red}{1}}2435, {\color{red}{1}}3245, {\color{red}{1}}3425, {\color{red}{1}}3524, {\color{red}{1}}4235, {\color{red}{1}}4325, {\color{red}{1}}4523; \\ \cline{2-2}
 & {\color{red}{21}}345, {\color{red}{21}}435; {\color{red}{24}}135, {\color{red}{24}}315; {\color{red}{42}}135, {\color{red}{42}}315; {\color{red}{45}}123, {\color{red}{45}}213; \\ \cline{2-2}
  & {\color{red}{231}}45, {\color{red}{234}}15, {\color{red}{235}}14, {\color{red}{251}}34, {\color{red}{253}}14, {\color{red}{254}}13, {\color{red}{312}}45, {\color{red}{314}}25, {\color{red}{315}}24, {\color{red}{321}}45, {\color{red}{324}}15, {\color{red}{325}}14, \\
  & {\color{red}{341}}25, {\color{red}{342}}15, {\color{red}{345}}12, {\color{red}{351}}24, {\color{red}{352}}14, {\color{red}{354}}12, {\color{red}{412}}35, {\color{red}{413}}25, {\color{red}{415}}23, {\color{red}{431}}25, {\color{red}{432}}15, {\color{red}{435}}12 \\ \hline
6 &  {\color{red}{1}}23456, {\color{red}{1}}23546, {\color{red}{1}}24356, {\color{red}{1}}24536, {\color{red}{1}}24635, {\color{red}{1}}25346, {\color{red}{1}}25436, {\color{red}{1}}25634, \\
 & {\color{red}{1}}32456, {\color{red}{1}}32546, {\color{red}{1}}35246, {\color{red}{1}}35426, {\color{red}{1}}53246, {\color{red}{1}}53426, {\color{red}{1}}56234, {\color{red}{1}}56324,  \\
 & {\color{red}{1}}34256, {\color{red}{1}}34526, {\color{red}{1}}34625, {\color{red}{1}}36245, {\color{red}{1}}36425, {\color{red}{1}}36524, {\color{red}{1}}42356, {\color{red}{1}}42536, {\color{red}{1}}42635, {\color{red}{1}}43256, \\
 & {\color{red}{1}}43526, {\color{red}{1}}43625, {\color{red}{1}}45236, {\color{red}{1}}45326, {\color{red}{1}}45623, {\color{red}{1}}46235, {\color{red}{1}}46325, {\color{red}{1}}46523, {\color{red}{1}}52346, {\color{red}{1}}52436, \\
 & {\color{red}{1}}52634, {\color{red}{1}}54236, {\color{red}{1}}54326, {\color{red}{1}}54623; \\ \cline{2-2}
 & {\color{red}{21}}3456, {\color{red}{21}}3546, {\color{red}{21}}4356, {\color{red}{21}}4536, {\color{red}{21}}4635, {\color{red}{21}}5346, {\color{red}{21}}5436, {\color{red}{21}}5634; \\
 & {\color{red}{56}}1234, {\color{red}{56}}1324, {\color{red}{56}}2134, {\color{red}{56}}2314, {\color{red}{56}}2413, {\color{red}{56}}3124, {\color{red}{56}}3214, {\color{red}{56}}3412; \\ \cline{2-2}
 & {\color{red}{231}}456, {\color{red}{231}}546; {\color{red}{246}}135, {\color{red}{246}}315; {\color{red}{261}}345, {\color{red}{261}}435;
 {\color{red}{264}}135, {\color{red}{264}}315; {\color{red}{312}}456, {\color{red}{312}}546; \\
 & {\color{red}{315}}246, {\color{red}{315}}426; {\color{red}{321}}456, {\color{red}{321}}546; {\color{red}{351}}246, {\color{red}{351}}426;
   {\color{red}{426}}135, {\color{red}{426}}315; {\color{red}{456}}123, {\color{red}{456}}213; \\
 & {\color{red}{462}}135, {\color{red}{462}}315; {\color{red}{465}}123, {\color{red}{465}}213; {\color{red}{513}}246, {\color{red}{513}}426;
   {\color{red}{516}}234, {\color{red}{516}}324; {\color{red}{531}}246, {\color{red}{531}}426; \\
 & {\color{red}{546}}123, {\color{red}{546}}213; \\ \cline{2-2}
 & {\color{red}{2341}}56, {\color{red}{2345}}16, {\color{red}{2346}}15; {\color{red}{2351}}46, {\color{red}{2354}}16, {\color{red}{2356}}14;
   {\color{red}{2361}}45, {\color{red}{2364}}15, {\color{red}{2365}}14; \\
 & {\color{red}{2413}}56, {\color{red}{2415}}36, {\color{red}{2416}}35; {\color{red}{2431}}56, {\color{red}{2435}}16, {\color{red}{2436}}15;
  {\color{red}{2451}}36, {\color{red}{2453}}16, {\color{red}{2456}}13; \\
 & {\color{red}{2513}}46, {\color{red}{2514}}36, {\color{red}{2516}}34; {\color{red}{2531}}46, {\color{red}{2534}}16, {\color{red}{2536}}14;
  {\color{red}{2541}}36, {\color{red}{2543}}16, {\color{red}{2546}}13; \\
 & {\color{red}{2561}}34, {\color{red}{2563}}14, {\color{red}{2564}}13; {\color{red}{2631}}45, {\color{red}{2634}}15, {\color{red}{2635}}14;
  {\color{red}{2651}}34, {\color{red}{2653}}14, {\color{red}{2654}}13; \\
 & {\color{red}{3142}}56, {\color{red}{3145}}26, {\color{red}{3146}}25; {\color{red}{3162}}45, {\color{red}{3164}}25, {\color{red}{3165}}24;
  {\color{red}{3241}}56, {\color{red}{3245}}16, {\color{red}{3246}}15; \\
 & {\color{red}{3251}}46, {\color{red}{3254}}16, {\color{red}{3256}}14; {\color{red}{3261}}54, {\color{red}{3264}}15, {\color{red}{3265}}14;
  {\color{red}{3412}}56, {\color{red}{3415}}26, {\color{red}{3416}}25; \\
 & {\color{red}{3421}}56, {\color{red}{3425}}16, {\color{red}{3426}}15; {\color{red}{3451}}26, {\color{red}{3452}}16, {\color{red}{3456}}12;
  {\color{red}{3461}}25, {\color{red}{3462}}15, {\color{red}{3465}}12; \\
 & {\color{red}{3521}}46, {\color{red}{3524}}16, {\color{red}{3526}}14; {\color{red}{3541}}26, {\color{red}{3542}}16, {\color{red}{3546}}12;
  {\color{red}{3561}}24, {\color{red}{3562}}14, {\color{red}{3564}}12; \\
 & {\color{red}{3612}}45, {\color{red}{3614}}25, {\color{red}{3615}}24; {\color{red}{3621}}45, {\color{red}{3624}}15, {\color{red}{3625}}14;
  {\color{red}{3641}}25, {\color{red}{3642}}15, {\color{red}{3645}}12;  \\
& {\color{red}{3651}}24, {\color{red}{3652}}14, {\color{red}{3654}}12; {\color{red}{4123}}56, {\color{red}{4125}}36, {\color{red}{4126}}35;
   {\color{red}{4132}}56, {\color{red}{4135}}26, {\color{red}{4136}}25;  \\
 & {\color{red}{4152}}36, {\color{red}{4153}}26, {\color{red}{4156}}23; {\color{red}{4162}}35, {\color{red}{4163}}25, {\color{red}{4165}}23;
  {\color{red}{4213}}56, {\color{red}{4215}}36, {\color{red}{4216}}35; \\
 & {\color{red}{4231}}56, {\color{red}{4235}}16, {\color{red}{4236}}15; {\color{red}{4251}}36, {\color{red}{4253}}16, {\color{red}{4256}}13;
  {\color{red}{4312}}56, {\color{red}{4315}}26, {\color{red}{4316}}25; \\
 & {\color{red}{4321}}56, {\color{red}{4325}}16, {\color{red}{4326}}15; {\color{red}{4351}}26, {\color{red}{4352}}16, {\color{red}{4356}}12;
  {\color{red}{4361}}25, {\color{red}{4362}}15, {\color{red}{4365}}12; \\
 & {\color{red}{4512}}36, {\color{red}{4513}}26, {\color{red}{4516}}23; {\color{red}{4521}}36, {\color{red}{4523}}16, {\color{red}{4526}}13;
  {\color{red}{4531}}26, {\color{red}{4532}}16, {\color{red}{4536}}12; \\
 & {\color{red}{4612}}35, {\color{red}{4613}}25, {\color{red}{4615}}23; {\color{red}{4631}}25, {\color{red}{4632}}15, {\color{red}{4635}}12;
  {\color{red}{5123}}46, {\color{red}{5124}}36, {\color{red}{5126}}34; \\
 & {\color{red}{5142}}36, {\color{red}{5143}}26, {\color{red}{5146}}23; {\color{red}{5213}}46, {\color{red}{5214}}36, {\color{red}{5216}}34;
  {\color{red}{5231}}46, {\color{red}{5234}}16, {\color{red}{5236}}14; \\
 & {\color{red}{5241}}36, {\color{red}{5243}}16, {\color{red}{5246}}13; {\color{red}{5261}}34, {\color{red}{5263}}14, {\color{red}{5264}}13;
  {\color{red}{5321}}46, {\color{red}{5324}}16, {\color{red}{5326}}14; \\
 & {\color{red}{5341}}26, {\color{red}{5342}}16, {\color{red}{5346}}12; {\color{red}{5361}}24, {\color{red}{5362}}14, {\color{red}{5364}}12;
  {\color{red}{5412}}36, {\color{red}{5413}}26, {\color{red}{5416}}23; \\
 & {\color{red}{5421}}36, {\color{red}{5423}}16, {\color{red}{5426}}13;  {\color{red}{5431}}26, {\color{red}{5432}}16, {\color{red}{5436}}12; \\
\hline
\end{tabular}

\vspace{0.3cm}

\begin{center}
Table 5: The sets $\mathcal{R}_n$, for $3 \leq n \leq 6$
\end{center}

\end{appendices}

\end{document}